\newtheorem{proposition}{\bf{Proposition}}[section]
\newtheorem{theorem}{\bf{Theorem}}[section]
\newtheorem{remark}{\sc{Remark} }[section]
\begin{document}

\title[Sufficient conditions to thermoelectrochemical problem]{Sufficient conditions to
the existence for solutions of a thermoelectrochemical problem}
\author{Luisa Consiglieri}
\address{Luisa Consiglieri, Independent Researcher Professor,  European Union}
\urladdr{\href{http://sites.google.com/site/luisaconsiglieri}{http://sites.google.com/site/luisaconsiglieri}}

\begin{abstract} 
A mathematical model  is introduced for thermoelectrochemical
phenomena in an electrolysis cell,
and its
qualitative analysis is focused on existence of solutions.
 The model consists of a system of nonlinear
parabolic PDEs in conservation form expressing conservation of energy, mass
and charge. On the other hand, an integral form of Newton's law is used to describe
heat exchange at the electrolyte/electrode interface, a nonlinear radiation
condition is enforced on the heat  flux at the wall and a nonlinear boundary
condition is considered for the electrochemical  flux in order to account for
Butler-Volmer kinetics.
The main objective is the nonconstant character of each parameter,
 that is, the coefficients are assumed to be  dependent on the spatial variable
and the temperature. 
Making recourse of known estimates of solutions for some auxiliary elliptic and
parabolic problems, which are explicitly determined by
the Gehring-Giaquinta-Modica theory,
 we find sufficient smallness conditions on the data to guarantee
the existence of the original solutions
via the Schauder fixed point argument.
These conditions may provide useful informations for numerical as well as
real applications. We conclude with an example of application, namely the
electrolysis of molten sodium chloride.
\end{abstract}

\keywords{thermoelectrochemical problem, smallness conditions}
\subjclass[2010]{35Q79; 35Q60; 80A30}

\maketitle

\maketitle

\section{Introduction}

The conservative laws are universal in the description of the 
physicochemical phenomena.
Their particular applications depend on the transport coefficients behavior.
The introduction of the thermal effects into physicochemical
devices are being addressed by applied mathematicians \cite{mauri}. 
Quantitative description  of the heat rate data
is discussed in \cite{bedk,hansen}. 
The model parameters 
(such as the electrical mobilities $u_i$, and
 the thermal conductivity $k$,  among others) are 
assumed to be constant positive quantities whose values are specified
to  numerical simulations.
Our first shortcoming is that these coefficients are commonly discontinuous.

In view of the above discussion,
 we develop a thermoelectrochemical model for an electrolyte domain.
Our second shortcoming is that the physicochemical 
phenomena truly pass on the boundary of the domain.
We mention to \cite{solo} a mathematical modeling of the
 interaction of electric, thermal, and diffusion processes
in infinitely diluted solutions of electrolytes.
 The production of nuclear grade
heavy water, including water electrolysis, distillation, and chemical exchange
processes, provide a process matched to the feed supply
\cite{levins,ryan}. We refer to \cite{latzz}
 a mathematical model  of Li-ion batteries
 based exclusively on universally accepted principles 
of nonequilibrium thermodynamics 
and
 the assumption of the one step intercalation reaction at the interface of
electrolyte and active particles; and
 to \cite{licht,si}
other  attractive thermoelectrochemical approaches.

In thermoelectrochemical modeling, the force-flux
relations are  (see, at the steady-state, \cite{epjp} and the
references therein)
\begin{align}\label{defq}
{\bf q}=&-\mathsf{K}\nabla\theta-R\theta^2 \sum_{i=1}^ID'_i\nabla c_i
-\Pi\sigma\nabla\phi;\\
{\bf J}_i=&-c_iS_i\nabla\theta- D_{i}\nabla c_i
-u_ic_i\nabla\phi;\quad (i=1,\cdots,I)\nonumber \\
{\bf j}=&-\alpha\sigma\nabla\theta-F\sum_{i=1}^Iz_{i}D_{i}\nabla c_i
-\sigma\nabla\phi. \nonumber
\end{align}
Here, ${\bf q}$,  ${\bf J}_i$
 and $\bf j$ are, respectively, the measurable heat flux (in W$\cdot$m$^{-2}$), 
 the ionic flux of component $i$ (in mol$\cdot$m$^{-2}\cdot$s$^{-1}$),
  and  the electric current density  (in C$\cdot$m$^{-2}\cdot$s$^{-1}$). 
The unknown functions are  the temperature  $\theta$,
 the molar concentration vector ${\bf c}=(c_1,\cdots, c_I)$,
and  the electric potential $\phi$.
 Hereafter the subscript $i$ stands for the correspondence to the
ionic component $i$ 
intervener in the reaction process. 
As the problem involves several symbols, we summarize their notation
in Appendix. In particular, 
$\mathsf{K}$ denotes the thermal conductivity tensor, reflecting 
anisotropic properties of the medium.
Also the Peltier coefficient
$\Pi$ can be a tensor
\cite{becker}. By this reason, we keep
both $\alpha$ and $\Pi$ as known functions,
 although  the first Kelvin relation correlates 
$\Pi$ with the Seebeck coefficient 
 $\alpha$. All transport
coefficients can be either
experimentally measured or calculated  as dependent on temperature
and spatial variable, while 
the Soret effect and the
related Dufour effect include the concentration of the correspondent
ionic component \cite{huja,dios}.

Dealing with these issues, our main concerns are: in the physical
point of view to introduce thermal radiation
on one part of the boundary, to approach the Butler-Volmer equation 
on other part of the boundary; and in the mathematical
point of view to find sufficient explicit conditions on the data
to the existence of solutions, under minimal assumptions on
the transport coefficients, as consequence of the fixed point theory.
The key of an integrability exponent larger than $n$
 for the solution (say in $n$ space dimensions)
is the need of making severe restrictions on the corresponding 
leading coefficient
function - as is carried out in the literature \cite{evans}.
 
\section{Statement of the problem and main theorem}
\label{spmt}

Let $T>0$ be an arbitrary (but preassigned) time, and $\Omega$ represent
an electrolysis cell, which consists (as
in general) of two electrodes and an electrolyte. We abbreviate
$Q_T=\Omega \times ]0,T[$.

From the conservation of  energy,  the mass balance equations, and
 the conservation of electric charge, we derive, respectively, in $Q_T$
\begin{align}\label{teq}
\rho c_\mathrm{p}\frac{\partial \theta}{\partial t}+\nabla\cdot{\bf q}=0;\\
\frac{\partial c_i}{\partial t}+\nabla\cdot{\bf J}_i=0;\\
\nabla\cdot{\bf j}=0,\label{peq}
\end{align}
 where the density $\rho$ and the
specific heat capacity $c_\mathrm{p}$ (at constant volume) are assumed  to be (positive)  constants.
The absence of external forces, assumed in (\ref{teq})-(\ref{peq}), is due to their occurrence at the surface of the electrodes.

The boundary $\partial\Omega$ is decomposed into four pairwise
 disjoint open subsets $\Gamma_l$, $l=$ a, c, w, o, representing the anode,
 the cathode, the wall, and the (remaining) outer, 
 respectively, surfaces such that  (cf. Fig. 1) 
\[
\partial\Omega=\overline\Gamma_{\rm a}\cup\overline\Gamma_{\rm c}
\cup\overline\Gamma_{\rm w}\cup\overline\Gamma_{\rm o}.
\] 
For the sake of simplicity, we call the
electrode/electrolyte interface  $\Gamma_{\rm e}=
\Gamma_{\rm a}\cup \Gamma_{\rm c}$ by simply $\Gamma$, and
we set $\Sigma_T=\Gamma_{\mathrm{w}}\times ]0,T[$.
Hence further, 
for each $l=$ a, c, w,
 $\theta_l$ represents a given temperature at $\Gamma_l$, and
 $\bf n$ is the outward unit normal to the boundary $\partial\Omega$.
\begin{figure}\label{fig1}
\includegraphics[scale=0.7]{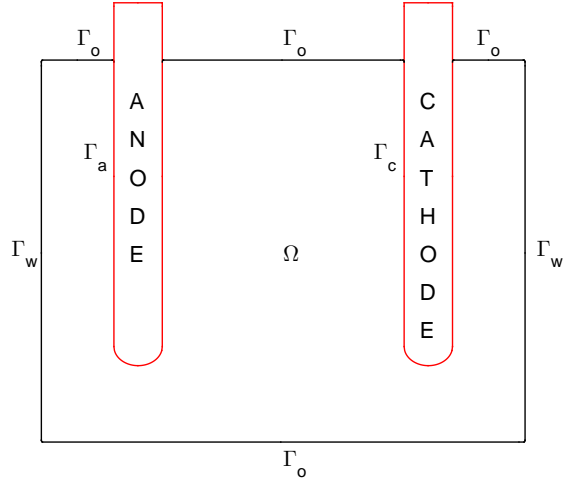}
\caption{Schematic
saggital representation of an electrolytic cell (with corners being smoothed by circumferences)}
\end{figure}

The parabolic-elliptic system (\ref{teq})-(\ref{peq})
is accomplished by the following boundary conditions.
For a.e. in $]0,T[$,
we consider the 
heat balance described by the global Newton law of cooling
\begin{equation}\label{heat}
\int_{\Gamma_\mathrm{a}}
{\bf q}\cdot{\bf n}\mathrm{ds}+
\int_{\Gamma_\mathrm{c}}{\bf q}\cdot{\bf n}\mathrm{ds}
=
\int_{\Gamma_\mathrm{e}}h_\mathrm{C}(\theta-\theta_\mathrm{e})\mathrm{ds},
\qquad \theta_\mathrm{e}
=\left\lbrace\begin{array}{ll}
\theta_\mathrm{a}&\mbox{on }\Gamma_\mathrm{a}\\
\theta_\mathrm{c}&\mbox{on }\Gamma_\mathrm{c}
\end{array}\right.
,
\end{equation}
where 
$h_{\rm C}$ denotes the conductive heat transfer coefficient.
By the constitutive law (\ref{defq}) of $\bf q$, 
 the left-hand side of (\ref{heat}) says that
the heat generated  is divided into the irreversible
reaction heat due to efficiency losses
 of the electrode reaction, and 
 the reversible  reaction heat mainly due to
the entropy change of the electrode reaction
which is called Peltier heat
and changes sign with changing current direction  (cf. \cite{gu}).

A gas bubble behavior at a
hydrogen-evolution electrode  was reported by some researchers
\cite{bore,kikuchi,ross}. This hydrogen gas generated at
the cathode causes turbulence of water or wastewater flow \cite{chen-chen}.
At each electrode/electrolyte interface ($l=$ a, c), we consider
\[
 -Fz_i{\bf J}_i\cdot{\bf n}_l=g_{i,l} (\cdot,\theta,\phi).
\]
Here,
$g_{i,l}$ may represent  the generalized Butler-Volmer kinetics 
that is composed by
the involved charge and mass balances 
in the charge-transfer reaction under illumination \cite{nie}, and
 the Butler-Volmer expression itself
\begin{equation}\label{bvl}
J_{l}\left(
\exp\left[\frac{\beta_i s_lF\eta}{R\theta}\right]-
\exp\left[-\frac{(1-\beta_i) s_lF\eta}{R\theta}\right]\right),
\end{equation}
where  $J_l$ represents the
transfer (exchange) current density due to the electrode reaction,
 $s_l$ is the stoichiometric
coefficient of electrons  in the anode/cathode  ($l=$ a, c),
 $\beta_i$ is the transfer coefficient  ($i=1,\cdots,I$),
and $\eta=\phi-\phi_{\rm eq}$ denotes  the surface
overpotential. 

Although the electroneutrality assumption says that 
${\bf j}=\sum_{i=1}^Iz_iF{\bf J}_i$, we consider on
$\Gamma\times ]0,T[$
\begin{equation}
-{\bf j}\cdot{\bf n}=g, \label{eass}\end{equation}
with $g$ being a prescribed surface electric current assumed to be
tangent to the surface for all $t>0$.
We refer as an open problem
the nonlocal Dirichlet boundary condition for the electric potential,
$\phi=j(I)$ \cite{ffh},
on the part of the boundary ($\Gamma_\mathrm{e}$) where the device is
connected to the circuit, 
with $j$ being  a nonlinear function and $I=\int_{\Gamma_\mathrm{e}}\sigma(\theta) \frac{\partial\phi}{ \partial n}$
denoting the total current, 
when the voltage drop across the electrical circuits is not prescribed
but is coupled to the remainder circuit.

Let temperature fulfill the radiative condition over
 $\Gamma_\mathrm{w}\times ]0,T[$
\begin{equation}
{\bf q}\cdot{\bf n}=h_\mathrm{R}
| \theta | ^{\ell-2}\theta -\gamma.\label{radia}
\end{equation}
This general exponent $\ell\geq 2$ \cite{lap}
accounts for the radiation behavior of the heavy
water electrolysis \cite{flei,kyr},
namely the  Stefan-Boltzmann radiation law if $\ell=5$
with 
$h_{\rm R}$  denoting the radiative heat transfer coefficient, {\em i.e.}
$h_\mathrm{R}=\sigma_{\rm SB}\epsilon$,
and $\gamma=\sigma_{\rm SB}\alpha \theta_\mathrm{w}^4$.
The parameters, the emissivity $\epsilon$ and the absorptivity $\alpha$, both
 depend on the spatial variable
and the temperature function $\theta$.

The following no outflows are considered:
\begin{align}
\mathrm{on }\ \Gamma_{\rm o}\times ]0,T[,&\qquad
\mathbf{q}\cdot{\bf n}=0;\\
\mathrm{on }\ \left(\Gamma_{\rm w}\cup\Gamma_{\rm o}\right)
\times ]0,T[,&\qquad
{\bf J}_i\cdot{\bf n}={\bf j}\cdot{\bf n}=0,\qquad (i=1,\cdots,I).\label{ji0}
\end{align}

Finally, the following initial conditions for all $x$ in $\Omega$ are assumed:
 \begin{equation}\label{ci}
 \theta(x,0)=\theta_0(x),\quad c_i(x,0)=c_i^0(x),\quad
i=1,\cdots,I.
 \end{equation}

In the framework of Sobolev and Lebesgue functional spaces, we 
use the following spaces of test functions:
\begin{align*}
V_{p,\ell}(Q_T)=&\{v\in L^p(0,T;W^{1,p}(\Omega)):\ 
v|_{\Sigma_T}\in
L^{\ell}(\Sigma_T)\};\\
V_p(\Omega)=&\{ v\in W^{1,p}(\Omega):\ \int_{\partial\Omega} v
\mathrm{ds}=0\},
\end{align*}
with their usual norms,  $p,\ell>1$.

In order to derive our variational problem,
 we note that every ionic mobility $u_i=z_{i}D_iF/(R\theta)$ 
 satisfies the Nernst-Einstein relation  $\sigma_i=F{z_i}{}u_ic_i$, with $\sigma_i=t_i\sigma$ representing
ionic conductivity, and $t_i$ is the transference 
number (or transport number) of species $i$.

Then our variational
problem under study is:

\noindent ($\mathcal P$)
 Find the triple
temperature--concentration--potential $(\theta,{\bf c},
\phi)$ such that verifies the variational problem:
\begin{align}
\rho c_\mathrm{p}\int_0^T\langle \partial_t \theta, v\rangle\mathrm{dt}+
\int_{Q_T}(\mathsf{K}(\cdot,\theta)\nabla\theta)\cdot \nabla v\mathrm{dx}
\mathrm{dt} 
+\int_{\Sigma_T}h_\mathrm{R} (\cdot,\theta)|\theta|^{\ell-2}
\theta v\mathrm{ds}\mathrm{dt} + \nonumber \\
+\int_0^T\int_{\Gamma}h_\mathrm{C} (\cdot,\theta)\theta v
\mathrm{ds}\mathrm{dt}= 
\int_0^T\int_{\Gamma}h_\mathrm{C}(\cdot,\theta)\theta_\mathrm{e}v
\mathrm{ds}\mathrm{dt}
+\int_{\Sigma_T}
\gamma(\cdot,\theta) 
v\mathrm{ds}\mathrm{dt} \nonumber\\
 -\int_{Q_T} \left(R\theta^2\sum_{i=1}^ID_i'(\cdot,c_i,\theta)\nabla c_i+
\Pi(\cdot,\theta) \sigma(\cdot,\theta)\nabla\phi\right) 
\cdot\nabla v\mathrm{dx}\mathrm{dt}
,\nonumber\\ \forall v\in V_{p',\ell}(Q_T); \label{pbt}
\\
\int_0^T\langle \partial_t c_i, v\rangle\mathrm{dt}+
\int_{Q_T}D_i(\cdot,\theta)\nabla c_i\cdot \nabla v\mathrm{dx}
\mathrm{dt} =
\int_0^T\int_{\Gamma}g_i(\cdot,\theta,\phi)v\mathrm{ds}\mathrm{dt}\nonumber\\
-
\int_{Q_T}\left(c_iS_i(\cdot,c_i,\theta)\nabla\theta+ 
\frac{t_i}{Fz_i}\sigma(\cdot,\theta)\nabla\phi
\right)\cdot \nabla v\mathrm{dx}
\mathrm{dt},\nonumber\\
 \forall v\in L^{p'}(0,T;W^{1,p'}(\Omega)),\quad i=1,\cdots,I
; \label{pbci}
\\
\int_\Omega\sigma(\cdot,\theta)\nabla\phi\cdot \nabla v\mathrm{dx}=
\int_{\Gamma}g v\mathrm{ds}\nonumber\\
-\int_\Omega
\left(\alpha(\cdot,\theta) \sigma(\cdot,\theta) \nabla\theta+
F\sum_{i=1}^Iz_iD_i(\cdot,\theta)\nabla c_i\right)
\cdot \nabla v\mathrm{dx},\nonumber \\
\forall v\in V_{p'}(\Omega), \qquad \mbox{a.e. in }[0,T[,\label{pbfi}
\end{align}
 where 
 $p'$ accounts for the conjugate exponent of $p$: $p'=p/(p-1)$.

We assume 
\begin{description}
\item[(H1)]  The  electrical conductivity, Peltier, Seebeck, Soret, Dufour,
and diffusion
 coefficients $\sigma,\Pi,\alpha,S_i,D_i',D_i$ ($i=1,\cdots,I$) are
  Carath\'eodory functions, 
{\em i.e.} measurable with respect to $x\in\Omega$ and
  continuous with respect to other variables, such that
   \begin{align}
   \exists\sigma_\#,\sigma^\#>0:\qquad& \sigma_\#\leq \sigma(x,e)\leq 
\sigma^\#; \label{smm}\\
\exists\Pi^\#>0:\qquad&
|\Pi(x,e){\bf a}|\leq \Pi^\# |{\bf a}|;\label{pmax}\\
\exists\alpha^\#>0:\qquad&
|\alpha(x,e)|\leq \alpha^\#;\label{amax}\\
\exists S_i^\#>0:\qquad& |dS_i(x,d,e)|\leq S_i^\#;\label{dsmax}\\
\exists (D_i')^\#>0:\qquad&  Re^2|D_i'(x,d,e)|\leq (D_i')^\#;\label{edmax}\\
\exists D_i^\#>0:\qquad& F|z_i|D_i(x,e)\leq D_i^\#;\label{dmax}\\
\exists (D_i)_\#>0:\qquad& D_i(x,e)\geq (D_i)_\#,\label{dmin}
  \end{align}
  a.e. $ x\in \Omega$, 
  for all ${\bf a}\in\mathbb{R}^n$,
and for all $d,e\in \mathbb R$.
  
\item[(H2)] The thermal conductivity
 $\mathsf{K}:\Omega\times\mathbb{R}\rightarrow\mathbb{M}_{n\times n}$ is a
  Carath\'eodory tensor, where $\mathbb{M}_{n\times n}$ denotes the set
  of $n\times n$ matrices, such that
 \begin{equation}
\exists k_\#>0:\quad K_{jl}(x,e)\xi_j \xi_l \geq k_\#|\xi|^2,
\quad\mbox{a.e. } x\in \Omega,\quad\forall e\in \mathbb{R},
\label{kmin}
  \end{equation}
  for all $\xi\in\mathbb{R}^n$,
under the summation convention over repeated indices:
$\mathsf{A}{\bf a}\cdot{\bf b}=A_{jl}a_j b_l ={\bf b}^\top  \mathsf{A}{\bf a}$;
and
 \begin{equation}
\exists k^\#>0:\quad | K_{jl}(x,e)|\leq k^\#,
\quad\mbox{a.e. } x\in \Omega,\quad\forall e\in \mathbb R,
\label{kmax}
  \end{equation}
  for all $j,l\in\{1,\cdots,n\}$.
  
 \item[(H3)]  The boundary operator $h_\mathrm{R}$ is a
 Carath\'eodory  function from $\Gamma_{\mathrm{w}}\times\mathbb{R}$
 into $\mathbb{R}$ such that
 \begin{equation}
\exists b_\#, b^\#>0:\quad b_\#\leq h_\mathrm{R}(x,e)\leq 
b^\#
\quad\mbox{a.e. } x\in \Gamma_{\mathrm{w}},\quad\forall e\in \mathbb R
.\label{bmm}
  \end{equation}
\item[(H4)]  The  transference  coefficient $t_i\in L^\infty (\Omega)$
is such that
   \begin{equation}
\exists t_i^\#>0:\quad
0\leq t_i(x)\leq F|z_i|t_i^\#, \quad\mbox{a.e. } x\in \Omega.\label{tmm}
\end{equation}

\item[(H5)]  For  some $\delta>0$,
$g\in L^{2+\delta}(\Gamma)$ such that $\int_{\Gamma}g\mathrm{ds}=0$.

\item[(H6)]  For  some $\delta>0$,
  $\theta_{\mathrm{e}}\in L^{2+\delta}(\Gamma\times ]0,T[)$, and
the boundary operators  $\gamma$ and
 $h_\mathrm{C}$ are
 Carath\'eodory  functions from $\Gamma_{\mathrm{w}}\times ]0,T[
\times\mathbb{R}$
 and $\Gamma\times ]0,T[\times\mathbb{R}$, respectively,
 into $\mathbb{R}$,
{\em i.e.} measurable with respect to $(x,t)$ and
  continuous with respect to the real variable. Moreover, they satisfy
 \begin{align}
\exists \gamma_{\mathrm{w}}\in L^{2+\delta}(\Sigma_T):
&\qquad |\gamma (x,t,e)|\leq 
\gamma_{\mathrm{w}}(x,t),
\quad\mbox{a.e. } x\in \Gamma_{\mathrm{w}};
\label{gmax}\\
\exists h_\mathrm{C}^\#>0:
&\qquad 0\leq h_\mathrm{C}(x,t,e)\leq h_\mathrm{C}^\#
,\quad \mbox{a.e. } x\in \Gamma,
\label{hmax}
  \end{align}
  a.e. $t\in ]0,T[$, and
for all $e\in \mathbb R$.
 
\item[(H7)]  For  some $\delta>0$, and 
for each $i=1,\cdots, I$, the boundary operator $g_i=g_{i,\mathrm{a}}\chi_{\Gamma_\mathrm{a}}+
g_{i,\mathrm{c}}\chi_{\Gamma_\mathrm{c}}$ is a
 Carath\'eodory  function from $\Gamma\times ]0,T[
\times\mathbb{R}\times\mathbb{R}$
 into $\mathbb{R}$ and there exists 
$\gamma_i\in L^{2+\delta}(\Gamma\times ]0,T[)$ such that
\begin{equation}
\exists g_i^\#\geq 0:\quad
|g_i(x,t,e,d)|\leq \gamma_i(x,t)+ g_i^\#(|d|+|e|),
\label{gimax}\end{equation}
  a.e. $ (x,t)\in \Gamma\times ]0,T[$, and for all $e,d\in \mathbb R$.
  
\item[(H8)]  For  some $\delta>0$, 
$\theta_0,c_i^0\in L^{2+\delta}(\Omega)$, $i=1,\cdots,I$.
\end{description}

 For the sake of simplicity, we assume in (H5)-(H8)
the same designation $\delta >0$.
Note that (\ref{gimax}) is verified for
a truncated version of the Butler-Volmer expression (\ref{bvl}).

The main interest of the mathematical model under study (governing
equations and boundary conditions) is strictly related to real world
applications (thermoelectrochemical phenomena in an electrolysis cell $\Omega$).
 In this respect, the consideration of a number $n$ of space dimensions
greater than $3$ is not really relevant. From the mathematical point of view, the broader
dimensional range, if available, is more meaningful in fact. Therefore, we state our main result
in the unified way.
\begin{theorem}\label{main}
Under the hypothesis (H1)-(H8), there exists a solution 
\[(\theta,
{\bf c},\phi)\in V_{p,\ell}(Q_T)\times
[L^p(0,T;W^{1,p}(\Omega))]^I\times
V_{p}(\Omega),
\]
for some $p>2$,
 to
(\ref{pbt})-(\ref{pbfi}) with the initial condition
(\ref{ci}) if provided by the smallness conditions  (\ref{datar}), and
(\ref{datar1})-(\ref{datari}).
\end{theorem}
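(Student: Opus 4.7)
The plan is to produce $(\theta,\mathbf{c},\phi)$ as a fixed point of an operator $\mathcal T$ acting on a closed bounded convex subset of a reflexive product space such as $L^2(Q_T)^{1+I}\times L^2(\Omega)$, and to invoke Schauder's theorem. Given a triple $(\hat\theta,\hat{\mathbf c},\hat\phi)$ in this set, I would first solve the linear elliptic problem (\ref{pbfi}) with leading coefficient $\sigma(\cdot,\hat\theta)$ and the Seebeck/diffusion terms treated as source, obtaining $\phi\in V_{p}(\Omega)$; then solve each linear parabolic problem (\ref{pbci}) with coefficient $D_i(\cdot,\hat\theta)$ and source built from $\hat\theta$ and $\phi$, obtaining $c_i$; and finally solve the linear parabolic problem (\ref{pbt}) with tensor $\mathsf K(\cdot,\hat\theta)$, the monotone boundary term $h_{\mathrm R}(\cdot,\hat\theta)|\theta|^{\ell-2}\theta$, and source assembled from $\mathbf c$ and $\phi$. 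This decoupling is legitimate because at each step the frozen coefficients satisfy the ellipticity and boundedness assumptions (H1)--(H4) uniformly, so Lax--Milgram and the Galerkin method for parabolic monotone problems give unique $W^{1,2}$-solutions.

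The decisive technical ingredient is the jump from $W^{1,2}$ to $W^{1,p}$ with some universal $p>2$. Here the Gehring--Giaquinta--Modica reverse H\"older inequality, applied up to the boundary to the three auxiliary problems, furnishes such an exponent depending only on the constants in (H1)--(H4) and on $\Omega$. The embeddings $V_{p,\ell}(Q_T)\hookrightarrow L^s(\Sigma_T)$ and $W^{1,p}(\Omega)\hookrightarrow L^s(\partial\Omega)$ for some $s>2$ are precisely what allows the boundary terms $h_{\mathrm C}(\cdot,\hat\theta)\theta_{\mathrm e}$, $\gamma(\cdot,\hat\theta)$ and $g_i(\cdot,\hat\theta,\hat\phi)$ to be handled as bounded source terms, and the Soret contribution $R\hat\theta^2 D_i'\nabla \hat c_i$ together with the Peltier cross-coupling $\Pi\sigma\nabla\hat\phi$ to be treated as genuine lower-order perturbations once the integrability gain is in place.

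A priori estimates arise from testing (\ref{pbt}) with $\theta$, (\ref{pbci}) with $c_i$ and (\ref{pbfi}) with $\phi$, exploiting coercivity via (\ref{kmin}), (\ref{bmm}), (\ref{dmin}), (\ref{smm}), and splitting every cross-term by Young's inequality. The outcome has the schematic form
\[
\|\theta\|_{V_{p,\ell}(Q_T)}+\sum_{i=1}^I\|c_i\|_{L^p(0,T;W^{1,p}(\Omega))}+\|\phi\|_{V_p(\Omega)}\leq C_0+C_1\bigl(\|\hat\theta\|+\|\hat{\mathbf c}\|+\|\hat\phi\|\bigr),
\]
where $C_0$ collects the data norms $\|\theta_0\|$, $\|c_i^0\|$, $\|g\|$, $\|\gamma_{\mathrm w}\|$, $\|\gamma_i\|$, $\|\theta_{\mathrm e}\|$, and $C_1$ is a product of trace and embedding constants times the structural bounds $\Pi^\#$, $\alpha^\#$, $S_i^\#$, $(D_i')^\#$, $D_i^\#$, $h_{\mathrm C}^\#$, $g_i^\#$, $t_i^\#$. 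The smallness conditions (\ref{datar}) and (\ref{datar1})--(\ref{datari}) quoted in the statement are then exactly the quantitative requirement that $C_1<1$, so that a ball of radius $R=C_0/(1-C_1)$ is invariant under $\mathcal T$. Continuity of $\mathcal T$ in the $L^2$-topology follows from the Carath\'eodory structure in (H1), (H3), (H6)--(H7) combined with dominated convergence, while compactness comes from the Aubin--Lions lemma applied to the two parabolic components (the bounds on $\partial_t\theta$ and $\partial_t c_i$ in suitable dual spaces read directly from the equations) and the Rellich embedding for $\phi$.

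The hardest part, I expect, is the bookkeeping of the smallness constants, not the abstract fixed-point setup. The Gehring exponent $p>2$ is not explicit in terms of the data, so each threshold must be stated in terms of $p$ and then shown to be simultaneously achievable with the ellipticity chain; the radiation nonlinearity $|\theta|^{\ell-2}\theta$ propagates $\ell$ into the estimates; and the Soret and Peltier terms drag factors of $\theta^2$ and $\theta$ into $C_1$. A careful splitting (treating, for instance, the $\theta^2 D_i'\nabla c_i$ term by absorbing $\theta^2$ through an $L^\infty$ bound coming from the $V_{p,\ell}$-estimate when $p>n$, or by H\"older with the reverse H\"older exponent) will be required to keep the smallness conditions both closable and expressible purely in terms of the data constants of (H1)--(H8).
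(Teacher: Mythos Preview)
Your overall architecture---Schauder fixed point combined with Gehring--Giaquinta--Modica higher integrability---is the paper's, but three concrete points are off. First, the closed convex set cannot live merely in $L^2(Q_T)^{1+I}\times L^2(\Omega)$: the auxiliary problems for $\phi$, $\Psi_i$ and $\Theta$ all carry divergence-form sources built from $\nabla\hat\theta$ and $\nabla\hat c_i$, so the frozen variables must already sit in a space with gradient control. The paper takes $\mathcal K\subset [L^p(0,T;W^{1,p}(\Omega))]^I\times V_{p,\ell}(Q_T)$ with explicit bounds on $\|\nabla v\|_{p,Q_T}$, $\|v\|_{\ell,\Sigma_T}$ and $\|\nabla v_i\|_{p,Q_T}+\|v_i\|_{p,Q_T}$, and establishes continuity and compactness of $\mathcal T$ directly in that strong topology (Proposition~\ref{corom}). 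Note also that $\phi$ is not a fixed-point variable at all: the map is $(\mathbf c,\theta)\mapsto\phi\mapsto({\bf\Psi},\Theta)$, with $\phi$ recomputed at each step from Proposition~\ref{lemaep}.

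Second, your reading of the smallness conditions as a single inequality $C_1<1$ with one common radius misrepresents (\ref{datar}) and (\ref{datar1})--(\ref{datari}). The paper uses $I+1$ distinct radii $R,R_1,\dots,R_I$ determined \emph{recursively}: condition (\ref{datar}) is $\mathcal B_0<1$, which makes the affine function $\mathcal P(r)=(1-\mathcal B_0)r-\mathcal P(0)$ increasing and fixes $R$ as its root (still depending on $R_1,\dots,R_I$ through $\mathcal P(0)$); then (\ref{datar1})--(\ref{datari}) form a chain of conditions $\mathcal P_i'(r)>0$, each involving the previously fixed radii through the combinations $\mathcal B_i=\mathcal A_i^0\mathcal B/(1-\mathcal B_0)+\mathcal A_i$. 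Collapsing this to one scalar inequality is both not what the theorem assumes and, in general, a strictly stronger requirement. Third, your concern about the factor $\theta^2$ in the Dufour term is moot: hypothesis (\ref{edmax}) is precisely $Re^2|D_i'(\cdot,d,e)|\le (D_i')^\#$, so $R\theta^2 D_i'$ is pointwise bounded by $(D_i')^\#$ and no $L^\infty$ control on $\theta$ or additional H\"older splitting is needed; likewise (\ref{dsmax}) already absorbs the factor $c_i$ in the Soret term $c_iS_i\nabla\theta$.
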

\begin{remark}
The existence of $p$ is restricted to $[2,2+\delta]$, where $\delta>0$ is chosen smaller than
min$\lbrace 2/[n(\upsilon-1)],1/(\varkappa-1)\rbrace $ with $\upsilon,\varkappa>1$ being well-determined
constants by
the Gehring-Giaquinta-Modica theory \cite{ark94,ark95,gia83}.
\end{remark}

 \section{Strategy of the proof of Theorem \ref{main}}
 \label{sst}

In this section we discuss the key of the proof,
 and we recall a known result for the solvability.
The proof of Theorem \ref{main}  is based on the Schauder fixed point theorem
\cite{zi}.
We freeze the concentrations-temperature pair $({\bf c},\theta)$ in the
closed convex set
\begin{align*}
 \mathcal{K}=\lbrace ({\bf v},v)\in [L^p(0,T;W^{1,p}(\Omega))]^I\times
V_{p,\ell}(Q_T)
:\  \|\nabla v\|_{p,Q_T}+\| v\|_{\ell,\Sigma_T}\leq R,\\
\|\nabla v_i\|_{p,Q_T}+\| v_i\|_{p,Q_T}\leq R_i,\ i=1,\cdots,I
\rbrace,
\end{align*}
where $p,\ell\geq 2$,  and we built the well defined functional $\mathcal T$ 
such that
\begin{equation}\label{deft}
({\bf c},\theta )\in\mathcal{K}
\mapsto \phi\in V_{p}(\Omega)\mapsto
({\bf\Psi},\Theta),
\end{equation}
where $\phi$, $\bf \Psi$, and
 $\Theta$ are the unique functions given at Propositions \ref{lemaep},
\ref{lemaci}, and \ref{lemata}, respectively. 
Their proofs rely on  existence results due 
to a weak reverse H\"older inequality for local solutions 
\cite{ark94,ark95,gia83}.
For reader's convenience, we recall the parabolic existence result \cite{ark95,gia83}.
\begin{theorem}\label{tmain}
Let $\Omega$ be a $C^{1}$ domain,  $T>0$, and the assumptions
 (\ref{kmin})-(\ref{bmm}) be fulfilled.
There exists $\upsilon>1$ such that for any
$0<\delta<2/[n(\upsilon-1)]$ and  $p\in [2,2+\delta]$
if  ${\bf f}\in {\bf L}^{2+\delta}(Q_T)$, 
$f\in L^{2+\delta}(\Gamma\times ]0,T[)$, 
$H\in L^{2+\delta}(\Sigma_T)$ and
$u_0\in L^{2+\delta}(\Omega)$,
then the variational problem 
\begin{align}
\int_0^T\langle \partial_t u, v\rangle\mathrm{dt}+\int_{Q_T}
( \mathsf{K}\nabla u)\cdot \nabla v\mathrm{dx}
\mathrm{dt}+\int_{\Sigma_T} h_{\mathrm{R}}(u)|u|^{\ell-2}
uv\mathrm{ds}\mathrm{dt}=\nonumber\\
=\int_{Q_T} {\bf f}\cdot \nabla v\mathrm{dx}
\mathrm{dt}+
\int_0^T\int_{\Gamma}fv \mathrm{ds}\mathrm{dt}+
\int_{\Sigma_T} Hv\mathrm{ds}\mathrm{dt},\quad\forall
v\in V_{p',\ell}(Q_T),\label{wvf}
\end{align}
has a  solution $u$ in $L^{p,\infty}(Q_T)\cap V_{p,\ell+p-2}(Q_T)$ such that
 $\partial_t u\in  [V_{p',\ell}(Q_T)]'$, and it verifies
 \begin{align}
{\rm ess} \sup_{t\in [0,T]}\| u\|_{p,\Omega}^p(t) \leq
\mathcal{H}(k_\#,b_\#,p)
\exp\left[(p-1)T\right];\label{gr1}\\
\|u\|^{\ell+p-2}_{\ell+p-2,\Sigma_T}\leq 
(b_\#)^{-1} 
\mathcal{H}(k_\#,b_\#,p)\left(1+(p-1)T\exp\left[(
p-1)T\right]\right);\\
\|\nabla u\|_{p,Q_T}\leq \mathcal{C}(k_\#)^{-1}
\left[\sqrt{ k_\#\mathcal{H}(k_\#,b_\#,2)\left(1+T\exp\left[T\right]\right)}+
\right. \nonumber\\
+\left. 
\sqrt{1+k_\#}\left( \|{\bf f}\|_{p,Q_T}
 +K_{2n/(n+1)}\left[\|f\|_{p,\Gamma\times ]0,T[}
 +\|H\|_{p,\Sigma_T} \right]\right)\right]
,\label{cota2}
\end{align}
with
\begin{align*}
\mathcal{H}(k_\#,b_\#,p)= \| u_0\|_{p,\Omega}^p+
 \left({p-1\over k_\#}\right)^{p/2}
\|{\bf f}\|_{p,Q_T}^p+ \nonumber\\
+\frac{p(\ell-1)}{(\ell+p-2)
 b_\#^{(p-1)/(\ell-1)}}\int_{\Sigma_T}
|H|^{\frac{\ell+p-2}{\ell-1}} \mathrm{ds}\mathrm{dt}+\\
+(p-1)\left( \left(\frac{p^2}{2k_\#(p-1)}\right)^{1/(p-1)}
+1\right)K_{2n/(n+1)}^{2/(p-1)}|\Omega|^{[(p-1)n]^{-1}}
 \|f\|_{p',\Gamma\times ]0,T[}^{p'}.
\end{align*}
Here,  
 $K_{2n/(n+1)}$ stands for  the continuity constant of the trace embedding
$W^{1,2n/(n+1)}(\Omega)\hookrightarrow L^2(\Gamma)$, and
$\mathcal{C}$ is a positive constant depending only on $\upsilon$, $p$,
$n$, and $\Omega$.
In particular, 
if  $ b_\#=0$ and $f=0$,
 then (\ref{gr1}) and (\ref{cota2}) remain true by 
replacing $\mathcal{H}(k_\#,b_\#,p)$ by
\begin{align}
\mathcal{H}(k_\#,p)= \| u_0\|_{p,\Omega}^p+
\left({p-1\over k_\#}\right)^{p/2}
\|{\bf f}\|_{p,Q_T}^p+ \nonumber\\
+(p-1)\left( \left(\frac{p^2}{2k_\#(p-1)}\right)^{\frac{1}{p-1}}
+1\right)K_{2n/(n+1)}^{2/(p-1)}|\Omega|^{[(p-1)n]^{-1}}
 \|H\|_{p',\Sigma_T}^{p'}.\label{defhp}
\end{align}
\end{theorem}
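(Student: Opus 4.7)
The plan is a two-stage argument. First, I would establish existence of a weak solution in the natural Hilbert setting $V_{2,\ell}(Q_T)$, together with the $L^p$-type energy estimates at exponent $p=2$; second, I would upgrade the gradient integrability from $L^2$ to $L^p$ with $p\in(2,2+\delta]$ via the Gehring-Giaquinta-Modica reverse H\"older machinery of \cite{ark94,ark95,gia83}. This structure is natural because the principal part is uniformly elliptic with Carath\'eodory coefficients, while the boundary nonlinearity $h_{\mathrm R}(\cdot,u)|u|^{\ell-2}u$ is of order $\ell-1$ in $u$ and contributes a coercive term on $\Sigma_T$.

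For existence at $p=2$, I would apply the Faedo-Galerkin method on a basis of $W^{1,2}(\Omega)$ (for instance, Neumann eigenfunctions of $-\Delta$). The finite-dimensional ODE system is globally solvable in view of coercivity (\ref{kmin}) and (\ref{bmm}). Uniform bounds follow from testing with $u_N$ itself; passage to the limit in the nonlinear trace term would use Aubin-Lions compactness to recover a.e.\ convergence of traces, combined with the Carath\'eodory character of $h_{\mathrm R}$. Having secured the $p=2$ solution, I would next derive (\ref{gr1}) by testing with $|u|^{p-2}u$ (admissible after the usual truncation-regularization procedure for $p\in[2,2+\delta]$). The time term produces $(1/p)\partial_t\|u(t)\|_{p,\Omega}^p$, (\ref{kmin}) generates the coercive quantity $(p-1)k_\#\int_{Q_T}|u|^{p-2}|\nabla u|^2\,\mathrm{dx}\mathrm{dt}$, and (\ref{bmm}) supplies $b_\#\|u\|_{\ell+p-2,\Sigma_T}^{\ell+p-2}$. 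Young's inequality with exponent pair $(p,p')$ absorbs the interior source $\mathbf{f}$, the pair $((\ell+p-2)/(\ell-1),(\ell+p-2)/(p-1))$ absorbs the lateral source $H$, and the trace embedding $W^{1,2n/(n+1)}(\Omega)\hookrightarrow L^2(\Gamma)$ together with a further Young step handles the boundary source $f$ (producing the constant $K_{2n/(n+1)}$). Collecting the residuals should yield precisely $\mathcal{H}(k_\#,b_\#,p)$, and Gronwall's inequality then produces the factor $\exp[(p-1)T]$. The second estimate is read off by not absorbing the $b_\#$-term but integrating it in time.

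For the gradient bound (\ref{cota2}), I would invoke the Gehring-Giaquinta-Modica theory. A Caccioppoli inequality on parabolic cylinders $Q_r\subset Q_T$ (interior cylinders, cylinders meeting $\Sigma_T$ or $\Gamma\times]0,T[$, and cylinders touching the initial slab), combined with Sobolev-Poincar\'e, would produce a weak reverse H\"older inequality in which the mean square of $|\nabla u|$ on $Q_r$ is bounded above by the $\upsilon$-th power of the mean of $|\nabla u|^{2/\upsilon}$ on $Q_{2r}$, plus the mean of $|\mathbf{f}|^2+|H|^2+|f|^2$ there, for some $\upsilon>1$ depending only on $n$ and $\Omega$. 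Gehring's lemma then upgrades $\nabla u$ from $L^2$ to $L^{2+\delta}$ for every $\delta<2/[n(\upsilon-1)]$, and a finite covering argument, combined with the $p=2$ estimate already derived, would produce (\ref{cota2}) with constant $\mathcal{C}(k_\#)$ encoding the ellipticity. In the special case $b_\#=0$ and $f=0$, the boundary contributions on $\Gamma$ and the $b_\#$-coefficient term drop out of the Young step, leaving exactly (\ref{defhp}).

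The hard part will be adapting the reverse H\"older/Caccioppoli pair to cylinders that meet $\Sigma_T$, where the nonlinear radiative term $h_\mathrm{R}(\cdot,u)|u|^{\ell-2}u$ lives: one must verify that this term contributes a controlled monotone quantity to the Caccioppoli estimate rather than destroying the scaling, and ensure that $|u|^{\ell+p-2}$ on $\Sigma_T$ can be treated as an absorbable source through the boundary mean. A secondary delicate point is tracking the explicit dependence on $k_\#$ and $b_\#$ through every Young-inequality absorption with enough precision to reproduce the exact form of $\mathcal{H}(k_\#,b_\#,p)$ stated in the theorem, including the constants $(p-1)/k_\#$ to power $p/2$ on $\mathbf{f}$ and $p^2/[2k_\#(p-1)]$ on $f$.
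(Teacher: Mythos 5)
The paper itself does not prove Theorem \ref{tmain}: it is explicitly ``recalled'' from the references \cite{ark94,ark95,gia83} (Arkhipova's reverse H\"older inequalities with boundary integrals and Giaquinta's higher-integrability theory), so there is no in-paper argument to compare against line by line. Your two-stage plan --- Faedo--Galerkin existence in the Hilbert setting, $L^p$ energy estimates obtained by testing with $|u|^{p-2}u$ plus Gronwall to produce $\mathcal{H}(k_\#,b_\#,p)\exp[(p-1)T]$, and then a Caccioppoli/reverse-H\"older/Gehring step to lift $\nabla u$ from $L^2$ to $L^{2+\delta}$ with $\delta<2/[n(\upsilon-1)]$ --- is exactly the strategy of those cited works, so the route is the right one and the exponent bookkeeping you indicate (Young with pair $(p,p')$ for $\mathbf f$, with pair $((\ell+p-2)/(\ell-1),(\ell+p-2)/(p-1))$ for $H$, and the trace constant $K_{2n/(n+1)}$ for $f$) is consistent with the stated form of $\mathcal{H}$.

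That said, what you have written is a plan rather than a proof, and you yourself flag the decisive steps without executing them. Three items must actually be carried out before this is acceptable: (i) the admissibility of $|u|^{p-2}u$ as a test function for $p>2$ --- the ``usual truncation-regularization'' needs to be compatible with the duality pairing $\langle\partial_t u,\cdot\rangle$ in $[V_{p',\ell}(Q_T)]'$ and with the boundary term, and this is where $u_0\in L^{2+\delta}(\Omega)$ and $p\le 2+\delta$ enter; (ii) the boundary version of the reverse H\"older inequality on parabolic cylinders meeting $\Sigma_T$, where the term $h_{\mathrm R}(\cdot,u)|u|^{\ell-2}u$ has growth $\ell-1$ and must be shown to contribute a sign-definite, absorbable quantity to the Caccioppoli estimate --- this is the entire technical content of \cite{ark94,ark95} and cannot be left as ``the hard part''; (iii) the explicit dependence of $\mathcal C$ on $\upsilon,p,n,\Omega$ only (and not on the data), which requires tracking the Gehring constant through the covering argument. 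Until (i)--(iii) are written out, the proposal establishes the shape of the estimates but not the theorem as stated.
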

\begin{remark}\label{ral}
By the Aubin-Lions theorem \cite{l}, we have that $u\in L^p(Q_T)$,
and the initial condition $u(0)=u_0$ makes sense at least in $L^p(\Omega)$.
\end{remark}

\section{Existence of auxiliary solutions}
\label{sec:2}

Let us establish the existence of solutions according to Section
\ref{sst}. Fix $\delta\in ]0,2/[n(\upsilon-1)][$ with  $\upsilon>1$ being
given from Theorem \ref{tmain}.

First, let us recall the existence of 
the  required auxiliary potential solving a second order elliptic equation
 of divergence form with a discontinuous leading coefficient.
 \begin{proposition}[Auxiliary potential]\label{lemaep}
 Let $\delta>0$, $t\in ]0,T[$,
$\theta(t), c_i(t)\in W^{1,2+\delta}(\Omega)$, 
 for every  $i=1,\cdots,I$,   
$g\in L^{2+\delta}(\partial\Omega)$ verify
 $\int_{\partial\Omega}g\mathrm{ds}=0$, 
and (\ref{smm}), (\ref{amax}), and (\ref{dmax}) hold. 
There exists $\varkappa>1$ such that  the Neumann problem (\ref{pbfi})
is uniquely (up to constants) solvable in $W^{1,p}(\Omega)$
for any $p\in [2,2+\delta]\cap [2,2+1/(\varkappa-1) [$.
Moreover, for each $]0,T[$ we have
\begin{align}
\sigma_\#\| \nabla \phi\|_{2, \Omega}\leq
K\|g\|_{2,\Gamma}+ 
\sigma^\#\alpha^\# \|\nabla\theta \|_{2, \Omega}
+\sum_{j=1}^I  D_{j}^\# \|\nabla c_j\|_{2, \Omega}; \ \label{cotaf2}\\
\| \nabla \phi\|_{p,\Omega}
\leq 
M_1\| \nabla \phi\|_{2, \Omega} 
+M_2(\sigma_\#)^{-1} 
\sqrt{ 1+\sigma_\# } \| {\mathcal F }(\theta,{\bf c})\|_{p, \Omega}
+\nonumber\\
+M_3 (\sigma_\#)^{-1}\sqrt{2+{ 2^{-1/n}\sigma_\#}}
 \| g\|_{p,\Gamma}
,\quad\label{cotaep}
\end{align}
where $K$ stands for a  positive constant depending on $n$ and $\Omega$,
 \[
{\mathcal F}(\theta,{\bf c}) = 
\sigma^\#\alpha^\# |\nabla\theta |
+\sum_{j=1}^I  D_{j}^\# |\nabla c_j|,
\] 
and  $M_1$, $M_2$ and $M_3$ are positive constants depending on $n$, $p$, $\varkappa$, and $\Omega$.
\end{proposition}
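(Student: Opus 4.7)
The plan is to establish Proposition \ref{lemaep} in four steps: a Lax--Milgram existence argument in $H^1$, a baseline energy estimate yielding (\ref{cotaf2}), a self-improvement of integrability à la Gehring--Giaquinta--Modica (GGM) yielding (\ref{cotaep}), and uniqueness up to additive constants.

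For existence, fix $t\in ]0,T[$ and work in $V_2(\Omega)=\{v\in H^1(\Omega):\int_{\partial\Omega}v\,\mathrm{ds}=0\}$, on which the Poincaré--Wirtinger inequality with boundary-mean normalization shows that $v\mapsto\|\nabla v\|_{2,\Omega}$ is an equivalent norm. The bilinear form $a(\phi,v)=\int_\Omega\sigma(\cdot,\theta)\nabla\phi\cdot\nabla v\,\mathrm{dx}$ is continuous and coercive on $V_2(\Omega)$ by (\ref{smm}). The right-hand side of (\ref{pbfi}) defines a continuous linear functional on $V_2(\Omega)$: the boundary term by the trace theorem and $g\in L^2(\Gamma)$, and the bulk term by (\ref{amax})--(\ref{dmax}) together with $\nabla\theta,\nabla c_i\in L^2(\Omega)$. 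The compatibility condition $\int_\Gamma g\,\mathrm{ds}=0$ is what allows us to quotient out constants consistently. Lax--Milgram delivers a unique $\phi\in V_2(\Omega)$, and testing (\ref{pbfi}) with $v=\phi$, applying ellipticity, Cauchy--Schwarz, and the trace embedding $W^{1,2}(\Omega)\hookrightarrow L^2(\Gamma)$ on $V_2(\Omega)$ with constant $K$, then dividing by $\|\nabla\phi\|_{2,\Omega}$, yields (\ref{cotaf2}).

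For the higher integrability (\ref{cotaep}), I invoke the GGM theory. Because $\sigma(\cdot,\theta(t,\cdot))$ is merely measurable but satisfies (\ref{smm}), interior and boundary Caccioppoli-type inequalities combined with Gehring's self-improving lemma, as in \cite{ark94,ark95,gia83}, produce an exponent $\varkappa>1$---depending only on $n$, $\Omega$, $\sigma_\#$, and $\sigma^\#$---for which local reverse Hölder inequalities hold. A covering argument at the Neumann boundary, where the test functions in (\ref{pbfi}) need not vanish, transfers the local improvement to a global bound in $L^p(\Omega)$ for $p<2+1/(\varkappa-1)$. The explicit prefactors $\sqrt{1+\sigma_\#}$ and $\sqrt{2+2^{-1/n}\sigma_\#}$ in (\ref{cotaep}) arise from balancing the bulk and boundary contributions in the Caccioppoli and trace inequalities that feed the reverse Hölder iteration. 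This is the main obstacle: tracking the constants explicitly in $\sigma_\#$ is essential because these estimates are propagated through the Schauder map $\mathcal{T}$ of (\ref{deft}), and clean dependence on $\sigma_\#$ is precisely what permits the data smallness conditions stated in Theorem \ref{main}.

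Uniqueness up to constants is then straightforward: the difference $\psi$ of two solutions lies in $V_2(\Omega)$ and satisfies $a(\psi,\psi)=0$, forcing $\nabla\psi\equiv 0$ by (\ref{smm}); the boundary-mean normalization then forces $\psi\equiv 0$, giving uniqueness modulo additive constants in the unnormalized formulation.
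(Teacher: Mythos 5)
Your proposal is correct and follows essentially the same route as the paper, which simply cites the classical Lax--Milgram/energy argument for existence and (\ref{cotaf2}) (via \cite{lc-ijpde}) and the Gehring--Giaquinta--Modica reverse H\"older machinery of \cite{ark94,ark95} for the higher-integrability estimate (\ref{cotaep}); you have merely expanded what those citations contain. The only caveat is that, like the paper, you do not actually derive the explicit constants $M_1$, $M_2$, $M_3$ and the $\sigma_\#$-dependent prefactors, but your account of where they come from is consistent with the cited sources.
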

\begin{proof}
The existence of the weak unique solution satisfying (\ref{cotaf2}) is classical
 (for details see, for instance, \cite{lc-ijpde}).
A similar proof of the regularity estimate (\ref{cotaep}) can be found in \cite{ark94,ark95}.
\end{proof}

The existence of the
auxiliary concentrations-temperature pair
$({\bf \Psi},\Theta)$ is consequence of Theorem \ref{tmain} as follows.
 \begin{proposition}[Auxiliary concentrations]\label{lemaci}
 Let $\theta\in L^p(0,T; W^{1,p}(\Omega))$,
and $\phi \in V_{p}(\Omega)$ be in accordance with Proposition
\ref{lemaep},    with $p\in [ 2,2+\delta] \cap [2,2+1/(\varkappa-1)[$. 
Under the assumptions (\ref{smm}), (\ref{dsmax}), (\ref{dmax})-(\ref{dmin}),
 (\ref{tmm}), and (\ref{gimax}),
 there exists a  function ${\bf \Psi}\in  [L^p(0,T;W^{1,p}(\Omega))]^I$ 
being the unique 
 solution  to the variational problem,
 for each  $ i=1,\cdots,I$, 
 \begin{align}
\int_0^T\langle \partial_t \Psi_i, v\rangle\mathrm{dt}+
\int_{Q_T}D_i(\theta)\nabla \Psi_i\cdot \nabla v\mathrm{dx}
\mathrm{dt} =
\int_0^T\int_{\Gamma}g_i(\theta,\phi)v\mathrm{ds}\mathrm{dt}\nonumber\\
-
\int_{Q_T}\left( c_iS_i(c_i,\theta)\nabla\theta+ 
\frac{t_i}{Fz_i}\sigma(\theta)\nabla\phi
\right)\cdot \nabla v\mathrm{dx}
\mathrm{dt},\label{pbcia}
\end{align} 
for all  $v\in L^{p'}(0,T;W^{1,p'}(\Omega))$.
In particular,  $\partial_t {\bf \Psi}\in [L^p(0,T; [W^{1,p'}(\Omega)]')]^I$,
and ${\bf \Psi}\in [C([0,T]; L^2(\Omega))]^I$.
  Moreover,   for every  $ i=1,\cdots,I$, we have
\begin{align}
\|\Psi_i\|_{p,Q_T}^p \leq T
\|\Psi_i\|_{\infty,p,Q_T}^p \leq T 
\exp\left[(p-1)T\right]\left[ \|c_{0,i}\|_{p,\Omega}^p+
\right.\nonumber\\
+\left({p-1\over (D_i)_\#}\right)^{p/2}\left(
S_i^\#\|\nabla\theta\|_{p,Q_T}+
t_i^\#\sigma^\#\|\nabla\phi\|_{p,Q_T}\right)^p+ \nonumber\\
+\left( \left(\frac{p^2(p-1)^{p-2}}{2(D_i)_\#}\right)^{ \frac{1}{p-1}}
+p-1\right) K_{2n/(n+1)}^{2p'/p}|\Omega|^{p'(pn)^{-1}}
\left(
 \|\gamma_i\|_{p',\Gamma\times]0,T[}+\right.
\nonumber\\
\left.\left. +g_i^\# K_{pn/( n+p-1)}
|\Omega|^{1-1/p}
\left(\|\nabla\theta\|_{p,Q_T}+\|\theta\|_{p,Q_T}
+P_p\|\nabla\phi\|_{p,Q_T}\right)\right)^{p'}\right] ;\quad \label{cotapsiip}
\end{align}\begin{align}
\|\nabla \Psi_i\|_{p,Q_T} \leq \mathcal{C} (D_i)_\#^{-1}
\left[
 \sqrt{(D_i)_\#(1+T\exp[T])}\|c_{0,i}\|_{2,\Omega} +
\mathcal{G}_i^\#+\right. \nonumber\\
+ X_i\|\nabla\phi\|_{2,Q_T}+Y_i
\|\nabla\phi\|_{p,Q_T}+ \nonumber\\
\left. +\left(S_i^\#\mathcal{Z}(|Q_T|^{1/2-1/p},(D_i)_\#,1)
+\mathcal{Q}_i\right)(\|\nabla\theta\|_{p,Q_T}+\|\theta\|_{p,Q_T})\right],\quad
 \label{cotapsii}
 \end{align}
 with
\begin{align}
\mathcal{G}_i^\#= 
K_{2n/( n+1)}\left(\sqrt{(1+T\exp[T])(2+(D_i)_\#)
|\Omega|^{1/n}}\|\gamma_i\|_{2,\Gamma\times]0,T[}+\right.\nonumber \\ 
\left. +\sqrt{1+(D_i)_\#}
\|\gamma_i\|_{p,\Gamma\times]0,T[}\right); \label{gii}\\
X_i=\sqrt{1+T\exp[T]} \left( t_i^\#\sigma^\#
+g_i^\#\sqrt{2+(D_i)_\#}
|\Omega|^{\frac{1+1/n}2}K_{2n/( n+1)}^2P_2\right);\\
Y_i= \sqrt{1+(D_i)_\#}
\left( t_i^\#\sigma^\#
+g_i^\#K_{2n/( n+1)} K_{pn/( n+p-1)}|\Omega|^{1-1/p}
P_p \right);\\\mathcal{Q}_i= K_{2n/( n+1)}g_i^\#\left(
\sqrt{1+(D_i)_\#}K_{pn/(n+p-1)}|\Omega|^{1-1/p}
+\right.\nonumber \\ \left.
+\sqrt{(1+T\exp[T])(2+(D_i)_\#)
|\Omega|^{1/n}}K_{2n/(n+1)}|\Omega|^{1-1/p}T^{1/2-1/p}\right);\\
\mathcal{Z}(a,d,e)=  
a\sqrt{1+T\exp[T]} +e \sqrt{1+d}
,\quad a,d,e>0, \label{defz}
\end{align}
and $P_p$ stands for the Poincar\'e constant correspondent to the space
exponent $p$.
\end{proposition}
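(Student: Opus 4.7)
The plan is to apply Theorem \ref{tmain} componentwise to each $\Psi_i$, matching the variational problem (\ref{pbcia}) to the template (\ref{wvf}) via the identifications $\mathsf{K}\leftrightarrow D_i(\theta)\mathsf{I}$ (so $k_\#\leftrightarrow (D_i)_\#$), $\ell=2$, $h_\mathrm{R}\equiv 0$, $H\equiv 0$ on $\Sigma_T$, $u_0\leftrightarrow c_i^0$, and bulk/flux data
\[
\mathbf{f}\leftrightarrow -c_iS_i(c_i,\theta)\nabla\theta-\tfrac{t_i}{Fz_i}\sigma(\theta)\nabla\phi,\qquad f\leftrightarrow g_i(\theta,\phi).
\]
Since $h_\mathrm{R}\equiv 0$, I would invoke the branch of Theorem \ref{tmain} adapted to $b_\#=0$ together with (\ref{defhp}) (with the $H$-term replaced by the $\Gamma$-flux $f$, the argument on $\Sigma_T$ being symmetric to that on $\Gamma\times ]0,T[$).

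Before invoking Theorem \ref{tmain}, I check that these data lie in the required classes. By (\ref{dsmax}), (\ref{smm}) and (\ref{tmm}) together with the hypotheses $\theta\in L^p(0,T;W^{1,p}(\Omega))$ and $\phi\in V_p(\Omega)$,
\[
\|\mathbf{f}\|_{p,Q_T}\leq S_i^\#\|\nabla\theta\|_{p,Q_T}+t_i^\#\sigma^\#\|\nabla\phi\|_{p,Q_T},
\]
and, through the growth condition (\ref{gimax}), the continuous trace embeddings $W^{1,2n/(n+1)}(\Omega)\hookrightarrow L^2(\Gamma)$ and $W^{1,pn/(n+p-1)}(\Omega)\hookrightarrow L^p(\Gamma)$, and Poincar\'e's inequality on $V_p(\Omega)$, I obtain
\[
\|g_i(\theta,\phi)\|_{p,\Gamma\times]0,T[}\leq \|\gamma_i\|_{p,\Gamma\times]0,T[}+g_i^\# K_{pn/(n+p-1)}|\Omega|^{1-1/p}\bigl(\|\theta\|_{p,Q_T}+\|\nabla\theta\|_{p,Q_T}+P_p\|\nabla\phi\|_{p,Q_T}\bigr).
\]
Theorem \ref{tmain} then produces a solution $\Psi_i\in L^p(0,T;W^{1,p}(\Omega))$ with $\partial_t\Psi_i\in L^p(0,T;[W^{1,p'}(\Omega)]')$; the embedding $\Psi_i\in C([0,T];L^2(\Omega))$ and the sense of the initial datum $\Psi_i(0)=c_i^0$ follow from Aubin-Lions/Lions-Magenes, as signaled in Remark \ref{ral}. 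Uniqueness is immediate: the difference $w$ of two solutions satisfies a linear homogeneous equation and, since $p\geq 2$ on the bounded domain $\Omega$, one has $w\in L^p(0,T;W^{1,p}(\Omega))\subset L^{p'}(0,T;W^{1,p'}(\Omega))$, so $w$ itself is admissible as a test function, and (\ref{dmin}) plus a Gronwall argument forces $w\equiv 0$.

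Finally, the explicit bounds (\ref{cotapsiip})--(\ref{cotapsii}) arise by feeding the above data estimates into (\ref{gr1}) and (\ref{cota2}) at two exponents: the $p=2$ version produces the factors $\sqrt{1+T\exp[T]}$ appearing in $X_i$ and in $\mathcal{G}_i^\#$, while the $L^p$ version produces the factors $\sqrt{1+(D_i)_\#}$ in $Y_i$ and $\mathcal{Q}_i$. I expect the main obstacle to be not analytical but combinatorial: the trace/Poincar\'e constants $K_{2n/(n+1)}$, $K_{pn/(n+p-1)}$, $P_2$, $P_p$ enter at distinct exponents through $\mathcal{H}$ and through the factor $\sqrt{1+k_\#}$ of (\ref{cota2}), and careful bookkeeping is required to regroup them into the structured quantities $X_i$, $Y_i$, $\mathcal{Q}_i$, $\mathcal{G}_i^\#$ and the auxiliary function $\mathcal{Z}$ defined in (\ref{defz}).
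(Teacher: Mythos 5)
Your proposal is correct and follows essentially the same route as the paper: both apply Theorem \ref{tmain} componentwise with $\mathsf{K}\leftrightarrow D_i(\theta)\mathsf{I}$, $k_\#\leftrightarrow (D_i)_\#$, no radiation term, $\mathbf{f}$ the Soret/migration fluxes and $f\leftrightarrow g_i(\theta,\phi)$, and then convert the resulting boundary norms of $\theta$ and $\phi$ into volume norms via the trace embeddings $\|v\|_{p,\Gamma}\leq K_{pn/(n+p-1)}|\Omega|^{1-1/p}(\|\nabla v\|_{p,\Omega}+\|v\|_{p,\Omega})$ and the Poincar\'e inequality on $V_p(\Omega)$, exactly as in the paper's proof. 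Your explicit uniqueness argument (testing the homogeneous difference equation with the difference itself, admissible since $p\geq 2\geq p'$) only makes precise what the paper leaves implicit.
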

\begin{proof}
The existence of the required auxiliary concentrations is consequence of
Theorem \ref{tmain} and Remark \ref{ral}. In particular, we have
\begin{align*}
\|\nabla \Psi_i\|_{p,Q_T} \leq \mathcal{C} (D_i)_\#^{-1}
 \left[ \sqrt{(D_i)_\#(1+T\exp[T])}\|c_{0,i}\|_{2,\Omega} +
H_i(\theta,\phi)+ \right.\nonumber\\
 +t_i^\#\sigma^\#\left(
\sqrt{1+T\exp[T]}
\|\nabla\phi\|_{2,Q_T}+ \sqrt{1+(D_i)_\#}
\|\nabla\phi\|_{p,Q_T}\right) \nonumber\\
\left. +S_i^\#\left(
\sqrt{1+T\exp[T]}
\|\nabla\theta\|_{2,Q_T}+ \sqrt{1+(D_i)_\#}
\|\nabla\theta\|_{p,Q_T}\right) \right],
 \end{align*}
with 
\begin{align*}
H_i(\theta,\phi)=\mathcal{G}_i^\#+\\
+ K_{2n/( n+1)}g_i^\#\left(
 \sqrt{1+(D_i)_\#}\left(\|\theta\|_{p,\Gamma\times]0,T[}+
\|\phi\|_{p,\Gamma\times]0,T[}\right)
+\right. \\ \left.
+\sqrt{(1+T\exp[T])(2+(D_i)_\#)
|\Omega|^{1/n}}\left(\|\theta\|_{2,\Gamma\times]0,T[}+
\|\phi\|_{2,\Gamma\times]0,T[}\right)\right).
\end{align*}
Then, (\ref{cotapsii}) holds by taking the following inequalities into account
\begin{align*}
\|v\|_{p,\Gamma}\leq K_{pn/(n+p-1)}|\Omega|^{1-1/p}
 \left(\|\nabla v\|_{p,\Omega}+\|v
\|_{p,\Omega}\right);\\
\|w\|_{p,\Gamma}\leq  K_{pn/(n+p-1)}|\Omega|^{1-1/p}
P_p \|\nabla w\|_{p,\Omega},
\end{align*}
for all $v\in W^{1,p}(\Omega)$ and $w\in V_p(\Omega)$.

With analogous argument, we find  (\ref{cotapsiip}).
\end{proof}

 \begin{proposition}[Auxiliary temperature]\label{lemata}
 Let $\theta,c_i\in L^p(0,T;W^{1,p}(\Omega))$, $i=1,\cdots,I$, 
$\phi\in V_{p}(\Omega)$ be 
in accordance with Proposition
\ref{lemaep}, where $p\in[ 2,2+\delta]\cap [2,2+1/(\varkappa-1)[$, and
 the assumptions (\ref{smm}), (\ref{pmax}), (\ref{edmax}), 
 (\ref{kmin})-(\ref{bmm}), and (\ref{gmax})-(\ref{hmax}) be fulfilled.
Then, the variational problem
\begin{align}
\rho c_\mathrm{p}\int_0^T\langle \partial_t \Theta, v\rangle\mathrm{dt}+
\int_{Q_T}(\mathsf{K}(\theta)\nabla\Theta)\cdot \nabla v\mathrm{dx}
\mathrm{dt}
+\int_{\Sigma_T}h_\mathrm{R} (\theta)|\Theta|^{\ell-2}
\Theta v\mathrm{ds}\mathrm{dt} \nonumber \\
+\int_0^T\int_{\Gamma}h_\mathrm{C} (\theta)\Theta v
\mathrm{ds}\mathrm{dt}=
\int_0^T\int_{\Gamma}h_\mathrm{C}(\theta)\theta_\mathrm{e}v
\mathrm{ds}\mathrm{dt}
+\int_{\Sigma_T}
\gamma(\theta) 
v\mathrm{ds}\mathrm{dt} \nonumber\\
 -\int_{Q_T} \left(R\theta^2\sum_{j=1}^ID_j'(c_j,\theta)\nabla c_j+
\sigma(\theta) \Pi (\theta)\nabla\phi\right) 
\cdot\nabla v\mathrm{dx}\mathrm{dt},\qquad \label{pbta}
 \end{align}
 for all $v\in V_{p',\ell}(Q_T),$
is uniquely  solvable in $V_{p,\ell}(Q_T)$.
In particular,  $\partial_t {\Theta}\in L^p(0,T; [W^{1,p'}(\Omega)]')$, and
$ {\Theta}\in C([0,T]; L^2(\Omega))$.
  Moreover, the following estimates hold:
\begin{align}
\|\Theta\| _{\infty,p,Q_T}\leq 
\mathcal{H}_0^{1/p}(\|\nabla\phi\|_{p,Q_T},\|\nabla \mathbf{c}\|_{p,Q_T}) \exp[(p-1)T/p]
; \qquad \label{cotatinfp}
\\
\|\Theta\|^{\ell+p-2}_{\ell+p-2,\Sigma_T}\leq 
\frac{1+(p-1)T \exp[(p-1)T]}{(\rho c_\mathrm{p})^{-1} b_\#}
\mathcal{H}_0(\|\nabla\phi\|_{p,Q_T},\|\nabla \mathbf{c}\|_{p,Q_T}) 
;\label{cotatlp}
\\
\|\nabla \Theta\|_{p,\Omega} \leq \mathcal{C} (k_\#)^{-1}
 \left[ \sqrt{ \rho c_\mathrm{p} k_\#(1+T\exp[T])}
\|\theta_{0}\|_{2,\Omega}+\mathcal{H}^\#\right.
+\nonumber\\ +
\sigma^\#\Pi^\#\mathcal{Z}( \|\nabla\phi\|_{2,Q_T}, (\rho c_\mathrm{p})^{-1} k_\#,
\|\nabla\phi\|_{p,Q_T}) +\nonumber \\
\left.
+\mathcal{Z}(|Q_T|^{1/2-1/p}, (\rho c_\mathrm{p})^{-1} k_\#,  \sum_{j=1}^I (D_j')^\#
\|\nabla c_j\|_{p,Q_T} ) \right],\label{cotata}
 \end{align}
with $
\gamma_{\mathrm{e}}:= h_\mathrm{C}^\# |\theta_{\mathrm{e}}|,$  $\mathcal{Z}$ is given as (\ref{defz}), and
\begin{align*}
\mathcal{H}_0(a,\mathbf{b})=\|\theta_0\|_{p,\Omega}^p+(\rho c_\mathrm{p})^{-p/2}
\left(\frac{p-1}{k_\#}\right)^{p/2}
\left(
\sigma^\#\Pi^\#a+\sum_{j=1}^I (D_j')^\# b_j \right) ^p+ \\
+(\rho c_\mathrm{p})^{-1} \frac{p(\ell-1)}{(\ell+p-2)
 b_\#^{(p-1)/(\ell-1)}}\int_{\Sigma_T}
|\gamma_{\mathrm{w}}|^{\frac{\ell+p-2}{\ell-1}} \mathrm{ds}\mathrm{dt}+\\ 
+(\rho c_\mathrm{p})^{-p'} \left( \left(\frac{p^2(p-1)^{p-2}}{2k_\#(\rho c_\mathrm{p})^{-1} }\right)^{1\over p-1}
+p-1\right)K_{2n/(n+1)}^{2/(p-1)}|\Omega|^{\frac{1}{(p-1)n}}
 \|\gamma_{\mathrm{e}}\|_{p',\Gamma\times ]0,T[}^{p'}  ;
\end{align*}\begin{align*}
\mathcal{H}^\#=
 \sqrt{1+(\rho c_\mathrm{p})^{-1} k_\#}
K_{2n/(n+1)}(\|\gamma_{\mathrm{w}}\|_{p,\Sigma_T}+
\|\gamma_{\mathrm{e}}\|_{p,\Gamma\times ]0,T[})+\\
 + \sqrt{k_\#(1+T\exp[T])}
\left(\sqrt{\frac{2(\ell-1)}{\ell (b_\#)^{1/(\ell-1)}}}\|\gamma_{\mathrm{w}}
\|_{\ell\,'
,\Sigma_T} ^{\ell\,'/2}
+\right. \\ \left. +\sqrt{2+k_\#}
K_{2n/(n+1)}|\Omega|^{1/(2n)}
\|\gamma_{\mathrm{e}}
\|_{2,\Gamma\times ]0,T[} \right) .
\end{align*}
\end{proposition}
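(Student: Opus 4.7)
The plan is to identify Proposition \ref{lemata} with an instance of Theorem \ref{tmain} after moving all quantities depending on the frozen triple $(\theta,\mathbf{c},\phi)$ to the right-hand side of (\ref{pbta}) and interpreting them as the data $(u_0,\mathbf{f},f,H)$. Concretely, dividing (\ref{pbta}) by $\rho c_\mathrm{p}$, I would set
\[
\mathbf{f}=-\frac{R\theta^2}{\rho c_\mathrm{p}}\sum_{j=1}^{I}D_j'(c_j,\theta)\nabla c_j-\frac{\sigma(\theta)\Pi(\theta)}{\rho c_\mathrm{p}}\nabla\phi,\quad f=\frac{h_\mathrm{C}(\theta)\theta_\mathrm{e}}{\rho c_\mathrm{p}},\quad H=\frac{\gamma(\theta)}{\rho c_\mathrm{p}},\quad u_0=\theta_0.
\]
The bounds (\ref{edmax}), (\ref{smm}), (\ref{pmax}), (\ref{gmax}) and (\ref{hmax}) combined with the regularity assumed on $(\theta,\mathbf{c},\phi)$ immediately yield $\mathbf{f}\in\mathbf{L}^{p}(Q_T)$, $f\in L^{2+\delta}(\Gamma\times]0,T[)$ and $H\in L^{2+\delta}(\Sigma_T)$, with explicit control by precisely the norms that appear inside the templates $\mathcal{H}_0$ and $\mathcal{H}^{\#}$.

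The only structural discrepancy with (\ref{wvf}) is the extra boundary term $\int_{\Gamma}h_\mathrm{C}(\theta)\Theta v$ retained on the left-hand side of (\ref{pbta}). Since $h_\mathrm{C}(\theta)\geq 0$ by (\ref{hmax}), this term is \emph{coercive}; it can be read as a bounded, zeroth-order Robin perturbation on $\Gamma$ acting analogously to the $\ell=2$ companion of the nonlinear term $h_\mathrm{R}(\theta)|\Theta|^{\ell-2}\Theta$ on $\Sigma_T$. Consequently the existence/regularity scheme behind Theorem \ref{tmain} applies verbatim, producing $\Theta\in V_{p,\ell}(Q_T)$ with $\partial_{t}\Theta\in[V_{p',\ell}(Q_T)]'$ (hence, by restriction, in $L^{p}(0,T;[W^{1,p'}(\Omega)]')$), and then $\Theta\in C([0,T];L^{2}(\Omega))$ follows from the Aubin–Lions embedding invoked in Remark \ref{ral}.

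I would then derive the three displayed estimates by substituting the above data into (\ref{gr1}), its trace companion, and (\ref{cota2}). The $L^{p,\infty}$ bound (\ref{cotatinfp}) and the surface bound (\ref{cotatlp}) are obtained by rewriting $\mathcal{H}(k_\#,b_\#,p)$ with the effective constants $(\rho c_\mathrm{p})^{-1}k_\#$, $(\rho c_\mathrm{p})^{-1}b_\#$ and with $\|\mathbf{f}\|_{p,Q_T}$ replaced by the flux bound above; this reproduces precisely $\mathcal{H}_0(\|\nabla\phi\|_{p,Q_T},\|\nabla\mathbf{c}\|_{p,Q_T})$. The gradient bound (\ref{cotata}) is obtained from (\ref{cota2}) by splitting the right-hand side into the two characteristic scales (the $\sqrt{1+T\exp[T]}$ piece coming from the $L^2$-energy and the $\sqrt{1+k_\#}$ piece from the reverse Hölder improvement), and collecting the $\nabla\phi$- and $\nabla c_j$-contributions via the shorthand $\mathcal{Z}$ of (\ref{defz}); the $H$- and $f$-contributions assemble into $\mathcal{H}^{\#}$. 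Uniqueness is standard: subtracting two solutions, testing with their difference, and using simultaneously the coercivity (\ref{kmin}), the monotonicity of $s\mapsto|s|^{\ell-2}s$ weighted by $h_\mathrm{R}\geq b_\#$, and the non-negativity of $h_\mathrm{C}$, all $(\theta,\mathbf{c},\phi)$-dependent coefficients being frozen so that no nonlinear coupling appears.

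The main obstacle will be justifying that the Robin term on $\Gamma$ does not spoil the Gehring–Giaquinta–Modica self-improvement at the heart of Theorem \ref{tmain}. One has to verify that a bounded non-negative frozen-in-$\theta$ zeroth-order boundary term on the distinct patch $\Gamma$ (disjoint from $\Sigma_T$) is compatible with the reverse Hölder inequality on boundary cubes intersecting $\Gamma$, and then track the $\rho c_\mathrm{p}$-rescaling through every constant so that the prefactor $\mathcal{C}(k_\#)^{-1}$ and all numerical constants inside $\mathcal{H}_0$ and $\mathcal{H}^{\#}$ match the stated form. Everything else reduces to bookkeeping on the data norms produced by (H1)–(H3) and (H6).
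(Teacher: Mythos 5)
Your proposal follows exactly the route of the paper, whose entire proof of this proposition reads: ``The existence of the required auxiliary temperature is consequence of Theorem \ref{tmain} and Remark \ref{ral}, by dividing (\ref{pbta}) by $\rho c_\mathrm{p}>0$.'' You in fact supply more than the paper does --- the explicit identification of the data $(\mathbf{f},f,H,u_0)$, the observation that the extra nonnegative Robin term $\int_\Gamma h_\mathrm{C}(\theta)\Theta v$ must be shown compatible with the reverse H\"older machinery of Theorem \ref{tmain} (a point the paper passes over in silence, merely absorbing $h_\mathrm{C}(\theta)\theta_\mathrm{e}$ into $\gamma_\mathrm{e}=h_\mathrm{C}^\#|\theta_\mathrm{e}|$), and the uniqueness argument.
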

\begin{proof}
The existence of the required auxiliary temperature is consequence of
Theorem \ref{tmain} and Remark \ref{ral}, by dividing (\ref{pbta}) by $\rho  c_\mathrm{p}>0$.
\end{proof}

The continuous dependence is stated in the following proposition.
\begin{proposition}\label{corom}
 The mapping $\mathcal{T}$ is continuous and compact from
  $\mathcal{K} $ into 
\[
[L^p(0,T;W^{1,p}(\Omega))]^I\times V_{p,\ell}(Q_T)
\]
 for the strong topology.
\end{proposition}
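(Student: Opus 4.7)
The plan is to exploit the a priori bounds of Propositions \ref{lemaep}-\ref{lemata} in combination with Aubin-Lions, Rellich-Kondrachov, and the uniqueness of the three auxiliary problems. The outer structure will be: (i) extract weakly convergent subsequences from boundedness, (ii) pass to the limit in the weak formulations, (iii) identify the limits via the uniqueness statements, and (iv) upgrade weak convergence of the gradients to strong convergence by testing difference equations against the differences themselves.

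For continuity I would fix $({\bf c}_n,\theta_n)\to({\bf c},\theta)$ strongly in $\mathcal{K}$. Up to an a.e.\ subsequence (in $Q_T$, on $\Sigma_T$, and on $\Gamma\times]0,T[$), the Carath\'eodory structure (H1)-(H7), the uniform pointwise bounds (\ref{smm})-(\ref{hmax}), and Lebesgue dominated convergence give strong $L^q$-convergence (every $q<\infty$) of each coefficient composition $\sigma(\cdot,\theta_n),\alpha(\cdot,\theta_n),\Pi(\cdot,\theta_n),D_i(\cdot,\theta_n),h_\mathrm{R}(\cdot,\theta_n),h_\mathrm{C}(\cdot,\theta_n),\gamma(\cdot,\theta_n)$. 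The bound (\ref{cotaep}) extracts $\phi_n\rightharpoonup\phi$ weakly in $V_p(\Omega)$, and passage to the limit in (\ref{pbfi}) combined with uniqueness in Proposition \ref{lemaep} identifies the limit; testing the difference equation with $\phi_n-\phi$ and using coercivity (\ref{smm}) forces $\sigma_\#\|\nabla(\phi_n-\phi)\|_{2}^{2}\to 0$, and interpolation with the uniform $L^p$-bound then delivers strong $L^p$-gradient convergence.

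The parabolic cases are analogous. The time-derivative bounds $\partial_t\Psi_{i,n}\in L^p(0,T;[W^{1,p'}(\Omega)]')$ and $\partial_t\Theta_n\in L^p(0,T;[W^{1,p'}(\Omega)]')$ feed Aubin-Lions-Simon to produce strong $L^p(Q_T)$-convergence of $\Psi_{i,n}$ and $\Theta_n$ together with boundary-trace convergence; uniqueness from Propositions \ref{lemaci}-\ref{lemata} identifies the limits, and testing with $\Psi_{i,n}-\Psi_i$ and $\Theta_n-\Theta$, using coercivity (\ref{dmin}), (\ref{kmin}) and the monotonicity of $u\mapsto|u|^{\ell-2}u$ to absorb the radiation contribution, yields strong gradient convergence. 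For compactness, I would begin from a merely bounded sequence $({\bf c}_n,\theta_n)\in\mathcal{K}$: its images are uniformly bounded in the target with uniformly bounded parabolic time derivatives in the appropriate duals, so Aubin-Lions supplies subsequences of $\Psi_{i,n}$ and $\Theta_n$ strongly convergent in $L^p(Q_T)$ and on the traces; by Banach-Alaoglu a further weakly convergent subsequence of $({\bf c}_n,\theta_n)$ exists in $\mathcal{K}$, and slice-wise Rellich together with dominated convergence in time (using the uniform $L^p(Q_T)$ bound in $\mathcal{K}$) gives an a.e.\ converging subsequence. From there the limiting-plus-testing routine from the continuity step applies verbatim.

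The hard part will be the three nonlinear boundary contributions: the Butler-Volmer-type flux $g_i(\cdot,\theta,\phi)$ and the Newton exchange on $\Gamma\times]0,T[$, and especially the radiation $h_\mathrm{R}(\cdot,\theta)|\Theta|^{\ell-2}\Theta$ on $\Sigma_T$. Strong trace convergence for the first two comes from combining the interior $W^{1,p}$-bound with the embedding $W^{1,p}(\Omega)\hookrightarrow L^p(\partial\Omega)$ and Aubin-Lions applied at trace level. The radiation term is handled via its monotonicity, exploiting the higher trace integrability exponent $\ell+p-2$ supplied by (\ref{cotatlp}), so that the nonlinear contribution can be absorbed into the energy estimate for $\Theta_n-\Theta$ after testing with $\Theta_n-\Theta$.
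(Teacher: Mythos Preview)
Your overall architecture---extract weak limits from the a priori bounds, identify them via uniqueness, and upgrade to strong convergence through difference equations---matches the paper's route. The divergence, and the place where your argument fails to close, is the upgrade step. You test the difference equations with the differences themselves to obtain $L^2$-gradient convergence, and then assert that ``interpolation with the uniform $L^p$-bound'' yields strong $L^p$-gradient convergence. That implication is false: $L^2$-convergence together with mere $L^p$-boundedness gives, by interpolation, only $L^q$-convergence for $q<p$ (consider $u_n=n^{1/p}\chi_{[0,1/n]}$ on $[0,1]$). The paper avoids this by applying the full $L^p$-estimates (\ref{cotaep}), (\ref{cotapsii}), (\ref{cotata})---which already encode the Gehring--Giaquinta--Modica gain---\emph{directly to the difference problems}. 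The data on the right-hand sides of those difference equations, such as $(\sigma(\theta)-\sigma(\theta_m))\nabla\phi$, $(D_i(\theta)-D_i(\theta_m))\nabla\Psi_i$, and the radiation and Newton terms moved to the right, tend to zero in the required $L^p$ or trace norms by dominated convergence, because the inputs converge strongly and $\nabla\phi,\nabla\Psi_i,\nabla\Theta$ are fixed $L^p$-functions. Hence $\|\nabla(\phi_m-\phi)\|_p$, $\|\nabla((\Psi_i)_m-\Psi_i)\|_p$, $\|\nabla(\Theta_m-\Theta)\|_p$ go to zero without any interpolation. If you wish to salvage your energy-method route, the repair is to use that the Gehring range is open: the estimates hold for every exponent in $[2,2+\delta]$, so at a fixed $p<2+\delta$ one has uniform $L^{2+\delta}$-gradient bounds, and interpolation between $L^2$-convergence and $L^{2+\delta}$-boundedness then does yield $L^p$-convergence.

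A secondary remark on your compactness step: extracting an a.e.\ convergent subsequence of the \emph{inputs} from mere weak $L^p(0,T;W^{1,p}(\Omega))$-convergence via ``slice-wise Rellich together with dominated convergence in time'' is not justified---there is no time-derivative control on the inputs, and oscillations such as $\theta_n(x,t)=\sin(nt)\psi(x)$ survive. The paper in fact starts from a strongly convergent input sequence, so its argument is primarily a continuity proof; the compactness is read off from Aubin--Lions on the outputs combined with the $L^p$-difference estimates above.
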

 \begin{proof}
 Let $\{({\bf c}_m,\theta_m)\}_{m\in\mathbb{N}}\subset \mathcal{K}$
 be a sequence  such that
  \[ ({\bf c}_m,\theta_m)\rightarrow ({\bf c},\theta)\quad
\mbox{ in }[L^p(0,T;W^{1,p}(\Omega))]^I\times V_{p,\ell}(Q_T).
\]
 Clearly that
  $({\bf c},\theta)\in \mathcal{K}$. 
We select a weakly converging subsequence with respect to the norms from
 the estimates (\ref{cotaf2})-(\ref{cotaep}),
(\ref{cotapsii}) and (\ref{cotata}).
That is,   the corresponding solutions
 $(\phi_m,{\bf \Psi}_m,\Theta_m)$  
 in accordance with  Propositions \ref{lemaep}, \ref{lemaci}, and \ref{lemata}
verify
  $\phi_m\rightharpoonup\phi$ in $W^{1,p}(\Omega)/\mathbb{R}$, and
  $({\bf \Psi}_m,\Theta_m)\rightharpoonup({\bf \Psi},\Theta)$
 in $[ L^p(0,T;W^{1,p}(\Omega))]^{I+1}$.
 Moreover,   $\phi_m\rightharpoonup\phi$
 in $V_{p}(\Omega)$. 
  Under the compact embeddings  $W^{1,p}(\Omega)\hookrightarrow\hookrightarrow
 L^{p}(\Omega) $ and  $W^{1,p}(\Omega)\hookrightarrow$
$  \hookrightarrow L^{p}(\partial\Omega) $
  the compactness  Aubin-Lions theorem states that
  we may extract a sequence in the set of approximate  concentrations
and temperature
solutions, $({\bf \Psi}_m,\Theta_m)$, 
which converges strongly in $L^p(Q_T)$ and in $L^p(\Sigma_T)$.
Thanks to  (\ref{cotatlp}), $\Theta_m\rightarrow\Theta $ in $L^\ell(\Sigma_T)$.

The above limits ensure that the weak limit $(\Phi,{\bf \Psi},\Theta)$
verifies  $(\Phi,{\bf \Psi},\Theta)=\mathcal{T}({\bf c},\theta)$.

Next we prove the strong convergence of $\phi_m$ to $\phi$.
Since the weak limit $\phi$ verifies (\ref{pbfi})
we write
\begin{align*}
\int_\Omega\sigma(\theta_m)\nabla(\phi_m-\phi)\cdot \nabla v\mathrm{dx}=
\int_\Omega(\sigma(\theta)-\sigma(\theta_m
))\nabla\phi\cdot \nabla v\mathrm{dx}\\
+\int_\Omega
\left(\alpha(\theta) \sigma(\theta) \nabla\theta-
\alpha(\theta_m) \sigma(\theta_m) \nabla\theta_m\right)
\cdot \nabla v\mathrm{dx}+\\
+
F\sum_{i=1}^Iz_i\int_\Omega
\left(D_i(\theta)\nabla c_i-D_i(\theta_m)\nabla (c_i)_m\right)
\cdot \nabla v\mathrm{dx}.
\end{align*}
Thus, we may estimate   $\nabla(\phi_m-\phi)$ in $L^p(\Omega)$ such that
  $\|\nabla(\phi_m-\phi)\|_{p,\Omega}\rightarrow 0$ as $m$ tends to infinity.
 
Finally the strong convergence for the concentrations-temperature pair
is obtained via the identities
 \begin{align*}
\int_0^T\langle \partial_t \left((\Psi_i)_m-\Psi_i\right)
, v\rangle\mathrm{dt}+
\int_{Q_T}D_i(\theta_m)\nabla  \left((\Psi_i)_m-\Psi_i\right)
\cdot \nabla v\mathrm{dx}\mathrm{dt} =\\
=\int_{Q_T}  \left(D_i(\theta)-D_i(\theta_m)\right)\nabla\Psi_i
\cdot \nabla v\mathrm{dx}\mathrm{dt} +\\
+\int_{Q_T}\left( c_iS_i(c_i,\theta)\nabla\theta-(c_i)_mS_i(
(c_i)_m,\theta_m)\nabla\theta_m
\right)\cdot \nabla v\mathrm{dx}\mathrm{dt}+\\+ 
\int_{Q_T}\frac{t_i}{Fz_i}\left(\sigma(\theta)\nabla\phi-
\sigma(\theta_m)\nabla\phi_m\right)
\cdot \nabla v\mathrm{dx}\mathrm{dt}+\\
+\int_0^T\int_{\Gamma}\left(g_i(\theta_m,\phi_m)-g_i(\theta,\phi)\right)
v\mathrm{ds}\mathrm{dt}
,\quad \forall  v\in L^{p'}(0,T;W^{1,p'}(\Omega));
\\ 
\rho c_\mathrm{p}
\int_0^T\langle \partial_t \left(\Theta_m-\Theta\right)
, v\rangle\mathrm{dt}+
\int_{Q_T}\left(\mathsf{K}(\theta_m)\nabla (\Theta_m-\Theta)\right)
\cdot \nabla v\mathrm{dx}
\mathrm{dt}
=\\=
\int_{Q_T}\left((\mathsf{K}(\theta)-\mathsf{K}(\theta_m))
\nabla\Theta\right)\cdot \nabla v\mathrm{dx}
\mathrm{dt}+\\
+\int_{\Sigma_T}\left( h_\mathrm{R} (\theta)|\Theta|^{\ell-2}
\Theta- h_\mathrm{R} (\theta_m)|\Theta_m|^{\ell-2}
\Theta_m+
\gamma(\theta_m) -\gamma(\theta) \right) v\mathrm{ds}\mathrm{dt}+  \\
+\int_0^T\int_{\Gamma}\left( h_\mathrm{C} (\theta)\Theta-
h_\mathrm{C} (\theta_m)\Theta_m+\left(h_\mathrm{C}(\theta_m)-h_\mathrm{C}(\theta)\right)
\theta_\mathrm{e}\right) v
\mathrm{ds}\mathrm{dt}+\\
 +R\int_{Q_T}\sum_{j=1}^I \left(\theta^2D_j'(c_j,\theta)\nabla c_j-
\theta_m^2D_j'((c_j)_m,\theta_m)\nabla (c_j)_m\right) 
\cdot\nabla v\mathrm{dx}\mathrm{dt}+\\
 +\int_{Q_T} \left(
\sigma(\theta) \Pi (\theta)\nabla\phi-
\sigma(\theta_m) \Pi (\theta_m)\nabla\phi_m\right) 
\cdot\nabla v\mathrm{dx}\mathrm{dt}
,\quad\forall v\in V_{p',\ell}(Q_T).
 \end{align*} 
 
Indeed, the estimates 
(\ref{cotapsii}) and (\ref{cotata}) applied to the differences
$(\Psi_i)_m-\Psi_i$ and $\Theta_m-\Theta$, respectively, yield
  their convergence to zero by the Lebesgue dominated convergence theorem.
\end{proof}
 
  \section{Proof of Theorem \ref{main}}
 \label{proof}

The functional $\mathcal T$ (cf. (\ref{deft})) is  well defined from
  $\mathcal{K} $ into $[L^p(0,T;W^{1,p}(\Omega))]^I
\times V_{p,\ell}(Q_T)$ by
 Propositions \ref{lemaep}, \ref{lemaci}, and \ref{lemata}.
 Its continuity is ensured by Proposition \ref{corom}.
 In order to apply the Schauder fixed point theorem it remains to prove that
  $\mathcal T$ maps  $\mathcal K$ into itself.
  To this aim, let   $({\bf c},\theta)\in \mathcal{K}$ be arbitrary
in order to show that 
  $\mathcal{T}({\bf c},\theta)\in \mathcal{K}$.
 First,  we rewrite (\ref{cotaf2})-(\ref{cotaep}) as
\begin{equation}
\| \nabla \phi\|_{p,Q_T}\leq B^\#+A^\#\left(\sigma^\#\alpha^\# R
+\sum_{j=1}^I  D_{j}^\#R_j\right),\label{cotaf}
\end{equation}
 with
 \begin{align}
 A^\#= (\sigma_\#)^{-1} \left(M_1
|\Omega|^{1/2-1/p}
+M_2\sqrt{ 1+\sigma_\# }\right);\label{defas}\\
B^\#= (\sigma_\#)^{-1} T^{1/p}\left(
M_1K \|g\|_{2 ,\Gamma}
 +M_3\sqrt{2+{ 2^{-1/n}\sigma_\#}}
 \| g\|_{p,\Gamma}\right).\label{defbs}
\end{align}

Secondly, we assume that
\begin{align*}
K_{2n/( n+1)}^{2/p}|\Omega|^{(pn)^{-1}}
\left[
 \|\gamma_i\|_{p',\Sigma_T} +g_i^\#K_{pn/(n+p-1)}|\Omega|^{1-1/p}\times
 \right.\\
\left.\times
\left(  B^\#+\left(1+P_pA^\#\sigma^\#\alpha^\#\right) R
+P_pA^\#\sum_{j=1}^I  D_{j}^\#R_j\right)\right]>1,
\end{align*}
otherwise an easier argument can be applied. Thus,
we insert (\ref{cotaf}) into 
(\ref{cotapsiip})-(\ref{cotapsii}) resulting in
\begin{align}
\|\Psi_i\|_{p,Q_T} +\|\nabla \Psi_i\|_{p,Q_T} \leq \mathcal{A}_i^0R+
\mathcal{A}_i \sum_{j=1}^I (D_j')^\#R_j+\nonumber \\+
\left(T\exp\left[ (p-1)T\right]\right)^{1/p}\left[
\|c_{0,i}\|_{p,\Omega}+Q_i^\#\|\gamma_i\|_{p',\Sigma_T} +\right.\nonumber\\
\left. +
\left(
\sqrt{\frac{p-1}{(D_i)_\#}}t_i^\#\sigma^\#
 +g_i^\# K_{pn/( n+p-1)}|\Omega|^{1-1/p}
P_p\right)B^\#\right]+\nonumber\\
+ \mathcal{C} (D_i)_\#^{-1}
 \left[ \sqrt{(D_i)_\#(1+T\exp[T])}\|c_{0,i}\|_{2,\Omega}  +\mathcal{G}_i^\#
+Y_iB^\#+
\right.\nonumber\\ 
\left. 
+X_i(\sigma_\#)^{-1}T^{1/2}K \|g\|_{2,\Gamma}
\right],\label{psipp}
 \end{align}
with
\begin{align}
\mathcal{A}_i^0=
\left(T\exp\left[ (p-1)T\right]\right)^{1/p}\left[
\sqrt{\frac{p-1}{(D_i)_\#}}\left(S_i^\#+
A^\#t_i^\#(\sigma^\#)^2\alpha^\# \right)+\right.\nonumber\\
\left.
 +g_i^\# Q_i^\# K_{pn/(n+p-1)}|\Omega|^{1-1/p}
\left(1+P_pA^\#\sigma^\#\alpha^\# \right)\right]+\nonumber\\
+\mathcal{C} (D_i)_\#^{-1}
 \left[ S_i^\#
\mathcal{Z}(|Q_T|^{1/2-1/p},(D_i)_\#,1)
+\mathcal{Q}_i+\right.\nonumber \\
\left.+(X_i(\sigma_\#)^{-1}|Q_T|^{1/2-1/p}+Y_iA^\#)
\sigma^\#\alpha^\# \right];\label{ai0}
\end{align}\begin{align}
\mathcal{A}_i=\mathcal{C} (D_i)_\#^{-1}
 \left( X_i(\sigma_\#)^{-1}|Q_T|^{1/2-1/p}+Y_i A^\#\right)
+\nonumber\\
+A^\#
\left(T\exp\left[ (p-1)T\right]\right)^{\frac{1}{p}}
\left(\sqrt{\frac{p-1}{(D_i)_\#}} t_i^\#\sigma^\#+
g_i^\# 
Q_i^\# K_{pn/( n+p-1)}|\Omega|^{1-\frac1p}
P_p\right)  ;\label{ai}\\
Q_i^\#=\left( \left(\frac{p^2(p-1)^{p-2}}{2(D_i)_\#}\right)^{1/( p-1)}
+p-1\right)^{1/ p}
K_{2n/(n+1)}^{2/p}|\Omega|^{(pn)^{-1}},\nonumber
\end{align}
where $\mathcal{G}_i^\#$, $X_i$, $Y_i$, $\mathcal{Q}_i$,
 $\mathcal{Z}$, and  $A^\#$, $B^\#$
 are given at (\ref{gii})-(\ref{defz}), and (\ref{defas})-(\ref{defbs}),
 respectively.

Next,  by the one hand, 
we insert (\ref{cotaf2}) into  (\ref{cotatlp}) resulting in
\begin{align}
\|\Theta\|_{\ell,\Sigma_T}^\ell\leq
\frac{1+T \exp\left[T\right]}{b_\#}
\left( \rho c_\mathrm{p}\|\theta_0\|_{2,\Omega}^2
+\frac{2(\ell-1)}{\ell (b_\#)^{1/(\ell-1)}}\|\gamma_{\mathrm{w}}\|_{\ell\,'
,\Sigma_T} ^{\ell\,'}+\right. \nonumber \\ 
+\left(\frac{2}{k_\#}+ \frac{1}{\rho c_\mathrm{p}} \right)K_{2n/(n+1)}^{2}|\Omega|^{n^{-1}}
\|\gamma_{\mathrm{e}}\|_{2
,\Gamma\times ]0,T[} ^2
+
\frac{1}{ k_\#}\left[
\frac{\Pi^\#\sigma^\#}{\sigma_\#}  
\sqrt{T} K \|g\|_{2,\Gamma}
\right. +\nonumber\\ +\left.\left.
 |Q_T|^{1/2-1/p}\left( \frac{\Pi^\#\alpha^\#(\sigma^\#)^2}{\sigma_\#}R+
 \left(1+\frac{\Pi^\#\sigma^\#}{\sigma_\#} \right)\sum_{j=1}^I (D_j')^\#R_j\right)\right]^2\right).\nonumber 
\end{align}
Since $\ell\geq 2$, we assume that
\begin{align*}
\frac{\Pi^\#\sigma^\#}{\sigma_\#} \left(\sqrt{T}K \|g\|_{2,\Gamma}
+  |Q_T|^{1/2-1/p}\alpha^\# \sigma^\#R\right)+\\
+ |Q_T|^{1/2-1/p}\left(1+\frac{\Pi^\#\sigma^\#}{\sigma_\#} \right)\sum_{j=1}^I (D_j')^\#R_j
>1,
\end{align*}
otherwise this term is upper bounded by one, and an easier argument can be applied. Thus, using the above 
inequalities, and inserting   (\ref{cotaf}) into (\ref{cotata}) we find
\begin{align}
\|\Theta\|_{\ell,\Sigma_T}+
\|\nabla \Theta\|_{p,\Omega} \leq\mathcal{B}_0R+
\mathcal{B} \sum_{j=1}^I (D_j')^\#R_j+\nonumber \\+
 \left(
\frac{1+T \exp\left[T\right]}{b_\#}\right)^{1/\ell}
\left[\left( \rho c_\mathrm{p}\|\theta_0\|_{2,\Omega}^2
+\frac{2(\ell-1)}{\ell (b_\#)^{1/(\ell-1)}}\|\gamma_{\mathrm{w}}\|_{\ell\,'
,\Sigma_T} ^{\ell\,'}+\right.\right. \nonumber \\ \left.\left.
+\left(\frac{2}{k_\#}+\frac{1}{\rho c_\mathrm{p}} \right)K_{2n/(n+1)}^{2}|\Omega|^{n^{-1}}
\|\gamma_{\mathrm{e}}\|_{2
,\Gamma\times ]0,T[} ^2\right)^{1/\ell}  +
\frac{\Pi^\#\sigma^\#}{(k_\#)^{1/\ell}\sigma_\#} 
T^{1-1/p} K \|g\|_{2,\Gamma}
\right]   +\nonumber\\
+\mathcal{C} (k_\#)^{-1}
 \left[ 
\sqrt{\rho c_\mathrm{p}k_\#(1+T\exp[T])}\|\theta_{0}\|_{2,\Omega}+\mathcal{H}^\#+
\right.\nonumber\\ 
\left.+\Pi^\#\sigma^\#
\mathcal{Z}(
T^{1/2}K \|g\|_{2,\Gamma}(\sigma_\#)^{-1}, (\rho c_\mathrm{p})^{-1}k_\#, B^\#)
\right],\label{ttlp}
 \end{align}
 where 
\begin{align} 
\mathcal{B}_0=
\frac{ \Pi^\#\alpha^\# (\sigma^\#)^{2}} {\sigma_\#} \left(
\frac{ \mathcal{C}\sqrt{ 1 +(\rho c_\mathrm{p})^{-1} k_\#} }{k_\# } 
\sigma_\#  A^\# +\right.\nonumber \\ \left.+
 \left[
\frac{ \mathcal{C}\sqrt{1+T \exp\left[T\right] }}{k_\#} +\left(
\frac{1+T \exp\left[T\right] }{b_\#k_\#}\right)^{1/\ell} \right] |Q_T|^{1/2-1/p}  \right) ; \label{defbb0} \\
\mathcal{B}=
\frac{ \mathcal{C}\sqrt{ 1 +(\rho c_\mathrm{p})^{-1} k_\#} }{k_\# } (1+\Pi^\#\sigma^\# A^\#)+\nonumber\\
 +
 \left[
\frac{ \mathcal{C}\sqrt{1+T \exp\left[T\right] }}{k_\#} +\left(
\frac{1+T \exp\left[T\right] }{b_\#k_\#}\right)^{1/\ell} \right] 
 \left(1+\frac{\Pi^\#\sigma^\#}{\sigma_\#} \right)|Q_T|^{\frac12-\frac1p}
. \label{defbb}
\end{align}

We seek for $(R,R_1,\cdots,R_I)$ such that $({\bf \Psi},\Theta)\in\mathcal{K}$.
According to (\ref{ttlp}), we define the continuous function
\[
\mathcal{P}(r)=
\left(1-\mathcal{B}_0 \right)r
-\mathcal{P}(0),
\]
where 
\[
\mathcal{P}(0)=C+ \mathcal{B}
\sum_{j=1}^I (D_j')^\#R_j>0,
\]
with the constant $C>0$ being independent on  $R,R_1,\cdots,R_I$.

  For our purposes in the finding of the
explicit smallness conditions
on the data, we choose $
R=\mathcal{P}(0)/(1-\mathcal{B}_0)$ as its positive root, 
 considering
the first smallness condition 
 \begin{align}
 \mathcal{B}_0<1.\label{datar}
\end{align}
With this choice we may define in a recurrence manner the following linear functions,
in accordance with (\ref{psipp}),
\begin{align*}
\mathcal{P}_1(r)=-\mathcal{P}_1(0)+\left(1-\mathcal{B}_1(D_1')^\# \right)r;\\
\mathcal{P}_2(r)=-\mathcal{P}_2(0)+
\left(1-\mathcal{B}_2(D_2')^\# (1-\frac{\mathcal{B}_1}{
1-\mathcal{B}_1(D_1')^\#})\right)r;\\
\mathcal{P}_3(r)= -\mathcal{P}_3(0)+\\
+\left(1-\mathcal{B}_3(D_3')^\# (1-\frac{\mathcal{B}_1}{
1-\mathcal{B}_1(D_1')^\#}-
\frac{\mathcal{B}_2}{1-\mathcal{B}_2(D_2')^\# (1-\frac{\mathcal{B}_1}{
1-\mathcal{B}_1(D_1')^\#})})
\right)r ,
\end{align*}
where
\[ 
\mathcal{B}_i:=\frac{
\mathcal{A}_i^0\mathcal{B} }{1-\mathcal{B}_0}+\mathcal{A}_i,
\]
where $\mathcal{B}_0$, $\mathcal{B}$, $\mathcal{A}_i^0$, and $\mathcal{A}_i$ are given at 
(\ref{defbb0}), (\ref{defbb}), (\ref{ai0}),
and (\ref{ai}), respectively.
All functions admit positive roots
(we call them $R_1,\cdots,R_I$) since $\mathcal{P}_i(0)>0$ for $i=1,\cdots,I$,
and the smallness conditions $\mathcal{P}_i'(r)>0$ {\em i.e.}
\begin{align}
\mathcal{B}_1(D_1')^\#<1 ; &&\label{datar1}\\
\mathcal{B}_{i}(D_{i}')^\#\left(1-\sum_{j=1}^{i-1}
\frac{\mathcal{B}_j}{ \mathcal{P}_j'(r)}\right)<1 ,&&
i=2,\cdots,I, \label{datari}
 \end{align}
hold.  For reader's convenience, we rewrite the above smallness conditions to the first two ionic components
 \begin{align*} 
 \mathcal{B}_1: = \frac{
\mathcal{A}_1^0\mathcal{B} }{1-\mathcal{B}_0}+\mathcal{A}_1 < ((D_1')^\# )^{-1} ;\\
 \mathcal{B}_2 := \frac{
\mathcal{A}_2^0\mathcal{B} }{1-\mathcal{B}_0}+\mathcal{A}_2 <
((D_{2}')^\# )^{-1} \frac{1 - \mathcal{B}_1 (D_1')^\# }{ 1- \mathcal{B}_1 (1+(D_1')^\# ) }.
\end{align*}

\section{Electrolysis of molten sodium chloride}
\label{nacl}

Many metals can be extracted in pure forms by electrolytic method: the alkali metals, 
and aluminum, as well as nonmetals: oxygen, hydrogen, and  chlorine gas.
We exemplify the electrolytic cell (cf. Fig. 1) for NaCl, with $\rho=$ 1500 kg$ \cdot $m$^{-3}$ and 
$c_\mathrm{p}=1197.8$ J$ \cdot $kg$^{-1} \cdot $K$^{-1}$. As in the 
industrial extraction of the sodium metal by Downs process,
we consider
  a cylindrical container  (with dimensions of 13 cm in diameter, and of 13 cm in  height) with stainless steel walls
($\ell=5$, the emissivity $0.2\leq \epsilon\leq 0.5$, and the absorptivity is assumed to obey the Kirchhoff law), 
and with copper/nickel electrodes  ($550< h_\mathrm{C}\leq 1820$ W$\cdot $m$^{-2}\cdot $K$^{-1}$ \cite{utig}).
Thus, we suppose $|\Omega |=1.5\times 10^{-3}$ m$^3$, which corresponds to
 $c_i^0=2.5667\times 10^4$ mol$\cdot $m$^{-3}$ ($i=$ Na$^+$, Cl$^-$).

The sodium chloride conducts electricity when it is melted (high melting point 1073.15 K).
At temperature range 1080 -- 1250 K (805 -- 980$^\circ$C), we have the following available data: 
 $k^\#=0.6$ and $k_\#=0.5$ W$\cdot $m$^{-1}\cdot $K$^{-1}$ \cite{galamba}, 
$\sigma_\#=359.7 $ S$ \cdot $m$^{-1}$,
$\sigma^\#=398.0 $ S$ \cdot $m$^{-1}$,
$(D_\mathrm{Na^+})_\#=7.7\times 10^{-9}$,
$(D_\mathrm{Cl^-})_\#=6.3\times 10^{-9}$ m$^2 \cdot $s$^{-1}$,
$D_\mathrm{Na^+}^\# =12\times 10^{-9} F|z_\mathrm{Na^+}|$,
$D_\mathrm{Cl^-}^\# =9.5\times 10^{-9} F|z_\mathrm{Cl^-} | $ m$^2 \cdot $s$^{-1}\cdot $C$ \cdot $mol$^{-1}$
\cite[pp. 49-63]{janz}. The Seebeck coefficient has values in the range
 $   10^{-5}-10^{-4}$ V$ \cdot $K$^{-1}$ \cite{wurger}.
 The  parameters,  $\Pi^\#$, and $(D_i')^\#$ ($i=$ Na$^+$, Cl$^-$), 
 are according  to, respectively, 
  the first Kelvin relation, and the Onsager reciprocal relationship. 

 Under constant initial conditions, 
the upper bound in (\ref{tmm}) can be given by
$ t_i^\#= F|z_i|D_i^\# c_i^0/(R\theta_0\sigma_\#)$.
 The Soret coefficient (S/D) is of order $10^{-3}$ -- $10^{-2}$ K$^{-1}$
 in liquids and electrolytes \cite{platten}, which implies $S_\mathrm{Na^+}^\# =1.2\times 10^{-12} c^0_\mathrm{Na^+} $
 and $S_\mathrm{Cl^-}^\# = 9.5\times 10^{-11} c^0_\mathrm{Cl^-}$.

The electrolysis separates the molten ionic compound into its elements.
The chemical half-reactions (and the standard  state potentials)  are:
\begin{itemize}
\item in the cathode (-): $2\mathrm{Na}^++2e^- \longrightarrow 2\mathrm{Na}$ ($E^0_\mathrm{redution}=-2.71$ V);
\item in the anode (+): $2\mathrm{Cl}^- \longrightarrow\mathrm{Cl}_2(g) +2e^-$ ($E^0_\mathrm{oxidation}=-1.36$ V).
\end{itemize}
Thus, the balanced chemical equation for the nonspontaneous  overall reaction is
\[ 2 \mathrm{NaCl} \longrightarrow 2\mathrm{Na} + \mathrm{Cl}_2(g)\qquad (E^0_\mathrm{cell}=-4.07 \mathrm{V}) . \]
The stoichiometric coefficients of electrons in the anode and cathode are,
respectively, $s_\mathrm{a}=s_\mathrm{c}=2$.
Assuming symmetric electron transfer, the transfer coefficients are $\beta_i=0.5$ ($i=$ Na$^+$, Cl$^-$).
Then, the Butler-Volmer equation is
$g_{i,l}= 2
J_{l} \sinh\left[F\eta/(R\theta)\right]$.

The production of metallic sodium at the cathode and chloride gas at the anode may operate at 
 $10^{4}$ A$ \cdot $m$^{-2}$, and at potential of 7 V, with
the cathodic current being balanced by the anodic current.

Therefore, for some $T>0$  the  smallness conditions (\ref{datar})-(\ref{datari}) hold
under the above data, and 
\begin{align*}
\mathcal{B}_0=& 0.0027
\left( 2 \mathcal{C}(M_1+ 18.99M_2+ \sqrt{1+T\exp[T]} )
+
44.643\left(1+T \exp\left[T\right] \right)^{1/5}\right);\\
\mathcal{B}= &
  48.9  (1+T\exp[T])^{1/5} + \mathcal{C}
\left[ 2  \sqrt{1+T\exp[T]} +2 
\right] ;\end{align*}\begin{align*}
\mathcal{A}^0_{Na^+}= &(T\exp[(p-1)T])^{1/p}\left[ 
 0.035+ 0.0032M_1 + 0.061 M_2
\right]  +\\ &
 +\mathcal{C}\left[ 400\sqrt{1+T\exp[T]} + 436.8
+ 36.8M_1 + 699.6 M_2
\right];\\
\mathcal{A}_{Na^+}=& \mathcal{C} ( 1322.2 
+   1322.2 M_1+ 25111.5 M_2)+ \\ & +
( 0.116 M_1 + 2.2 M_2) (T\exp[(p-1)T])^{1/p}  ; \\ & \qquad\qquad
 ((D_{Na^+}')^\# )^{-1} = 6.9281 \times 10^5.
\end{align*}
Since the values of parameters for Cl$^{-}$ are of the same order of
 the ones for Na$^{+}$, then $\mathcal{A}_{Cl^{-}}^0$ and 
 $\mathcal{A}_{Cl^{-}}$ have similar expressions.
Further optimization work should be done to precise the above universal constants.
Their quantitative form is being a matter of study of ongoing work.

\begin{table}[h]
\centering  \caption{Universal constants} \label{tab}\small
\begin{tabular}{|c|c|c|}
\hline
 $F$  & Faraday constant &$9.6485 \times 10^4 $ C$\cdot $mol$^{-1}$\\
\hline
 $R$ & gas constant & $8.314 $ J$\cdot $mol$^{-1}\cdot $K$^{-1}$\\
\hline
$\sigma_{\rm SB}$& Stefan-Boltzmann constant
& $5.67\times 10^{-8}$ W$\cdot $m$^{-2} \cdot $K$^{-4}$\\
& (for blackbodies)&\\
\hline
\end{tabular}
\end{table}

\section*{Appendix}

{\bf Nomenclature list:}

$
\begin{array}{lll}
c&\mbox{molar concentration}&\mbox{mol$ \cdot $m}^{-3}\\
 c_\mathrm{p} &\mbox{specific heat capacity}&
\mbox{J$\cdot $kg$^{-1}\cdot $K}^{-1} \\
 D &\mbox{diffusion coefficient} &\mbox{m$^2\cdot $s$^{-1}$}\\
 D' &\mbox{Dufour coefficient}&\mbox{m$^2\cdot $s$^{-1}\cdot $K}^{-1}\\
 h &\mbox{heat transfer coefficient}&\mbox{W$\cdot $m$^{-2}\cdot $K}^{-1}\\
 k &\mbox{thermal conductivity}&\mbox{W$\cdot $m$^{-1}\cdot $K}^{-1}\\
 S &\mbox{Soret coefficient (thermal diffusion)}&\mbox{m$^2\cdot $s$^{-1}\cdot $K}^{-1}\\
 t& \mbox{transference number} &\mbox{(dimensionless)}\\
 u &\mbox{mobility}&\mbox{m$^2\cdot $V$^{-1}\cdot $s}^{-1}\\
 z &\mbox{valence} &\mbox{(dimensionless)}\\
\alpha  &\mbox{Seebeck coefficient} &\mbox{V$ \cdot $K}^{-1}\\
\phi &\mbox{electric potential}& \mbox{V}\\
 \Pi &\mbox{Peltier coefficient} &\mbox{V}\\
\rho &\mbox{density} &\mbox{kg$\cdot $m}^{-3}\\
\sigma &\mbox{electrical conductivity}&\mbox{S$ \cdot $m}^{-1}\\
\theta &\mbox{absolute temperature} &\mbox{K}
\end{array}
$

\subsection*{Acknowledgment}
The author would like to thank
the reviewer for developing the original manuscript.
The final publication is
available at link.springer.com.

\end{document}